\renewcommand{\mathcal}{\mathscr}
\DeclarePairedDelimiter{\floor}{\lfloor}{\rfloor}
\theoremstyle{definition}
\newtheorem{ntn}{Notation}[section]
\newtheorem{dfn}[ntn]{Definition}
\theoremstyle{plain}
\newtheorem{lem}[ntn]{Lemma}
\newtheorem{prp}[ntn]{Proposition}
\newtheorem{thm}[ntn]{Theorem}
\newtheorem{cor}[ntn]{Corollary}
\theoremstyle{remark}
\newtheorem{rmk}[ntn]{Remark}
\numberwithin{equation}{section}
\newcommand{\ideal}[1]{{\left\langle#1\right\rangle}}
\newcommand{\xymat}{\SelectTips{cm}{}\xymatrix}
\newcommand{\into}{\hookrightarrow}
\newcommand{\p}{\partial}
\newcommand{\wt}{\widetilde}
\newcommand{\A}{\mathcal{A}}
\renewcommand{\AA}{\mathbb{A}}
\newcommand{\E}{\mathcal{E}}
\newcommand{\F}{\mathcal{F}}
\newcommand{\GG}{\mathds{G}}
\newcommand{\G}{\mathcal{G}}
\newcommand{\I}{\mathcal{I}}
\newcommand{\J}{\mathcal{J}}
\newcommand{\K}{\mathcal{K}}
\renewcommand{\L}{\mathcal{L}}
\newcommand{\M}{\mathcal{M}}
\newcommand{\NN}{\mathbb{N}}
\renewcommand{\O}{\mathcal{O}}
\newcommand{\PP}{\mathds{P}}
\newcommand{\QQ}{\mathds{Q}}
\newcommand{\reg}{\mathrm{reg}}
\newcommand{\red}{\mathrm{red}}
\newcommand{\sing}{\mathrm{sing}}
\newcommand{\ZZ}{\mathds{Z}}
\newcommand{\mm}{\mathfrak{m}}
\newcommand{\pp}{\mathfrak{p}}
\newcommand{\qq}{\mathfrak{q}}
\DeclareMathOperator{\depth}{depth}
\DeclareMathOperator{\divisor}{div}
\DeclareMathOperator{\grade}{grade}
\DeclareMathOperator{\codim}{codim}
\DeclareMathOperator{\ch}{ch}
\DeclareMathOperator{\Der}{Der}
\DeclareMathOperator{\SDer}{\mathcal{D}\kern -.5pt er}
\DeclareMathOperator{\Ext}{Ext}
\DeclareMathOperator{\SExt}{\mathcal{E}\kern -.5pt xt}
\DeclareMathOperator{\Hom}{Hom}
\DeclareMathOperator{\SHom}{\mathcal{H}\kern -.5pt om}
\DeclareMathOperator{\SH}{\mathcal{H}}
\DeclareMathOperator{\id}{id}
\DeclareMathOperator{\ord}{ord}
\DeclareMathOperator{\Proj}{Proj}
\DeclareMathOperator{\Spec}{Spec}
\begin{document}

\title[Derivations on graded algebras]{A cohomological interpretation of\\derivations on graded algebras}

\author[X.~Liao]{Xia Liao}
\address{X.~Liao\\
Department of Mathematics\\
University of Kaiserslautern\\
67663 Kaiserslautern\\
Germany}
\email{\href{mailto:liao@mathematik.uni-kl.de}{liao@mathematik.uni-kl.de}}


\author[M.~Schulze]{Mathias Schulze}
\address{M.~Schulze\\
Department of Mathematics\\
University of Kaiserslautern\\
67663 Kaiserslautern\\
Germany}
\email{\href{mailto:mschulze@mathematik.uni-kl.de}{mschulze@mathematik.uni-kl.de}}

\thanks{The research leading to these results has received funding from the People Programme (Marie Curie Actions) of the European Union's Seventh Framework Programme (FP7/2007-2013) under REA grant agreement n\textsuperscript{o} PCIG12-GA-2012-334355.}

\date{\today}

\subjclass{13N15, 14N05 (Primary), 14C20 (Secondary)}

\keywords{Euler sequence, graded algebra, graded linear series, derivation}

\begin{abstract}
We trace derivations through Demazure's correspondence between a finitely generated positively graded normal $k$-algebras $A$ and normal projective $k$-varieties $X$ equipped with an ample $\QQ$-Cartier $\QQ$-divisor $D$.
We obtain a generalized Euler sequence involving a sheaf on $X$ whose space of global sections consists of all homogeneous $k$-linear derivations of $A$ and a sheaf of logarithmic derivations on $X$.
\end{abstract}

\maketitle
\tableofcontents

\section{Introduction}

As an elementary motivation for the topic of this article, we recall the Euler sequence on projective space $X=\PP^n_k$ over a field $k$.
Setting $\M:=\O_X(1)^{n+1}$, it reads
\begin{equation}\label{20}
0 \to \O_X \to\M \to \Theta_X \to 0.
\end{equation}
For any $d\in\ZZ$, $H^0(X,\M\otimes\O_X(d))$ computes the homogeneous $k$-linear derivations of degree $d$ of the section ring $A=\bigoplus_{d\geq 0}H^0(X,\O_X(d))$.
In fact, the section ring identifies with the polynomial ring $A=k[x_0,\dots,x_n]$ and 
the module of $k$-linear derivations of $A$,
\[
\Der_k(A)\cong\bigoplus_{i=0}^nA\p_{x_i}\cong A(1)^{n+1},
\]
is free of rank $n+1$.
Therefore, $\wt{\Der_k(A)}\cong\M$ and $H^0(X,\M\otimes\O_X(d))\cong\Der_k(A)_d$. 

There is the following generalization due to Wahl~\cite{Wah83}.
Let $k$ be algebraically closed of characteristic $0$, let $X$ be a normal projective $k$-variety, and let $\L\cong\O_X(D)$ be the sheaf of sections of an ample line bundle $L$ on X. 
Then
\begin{itemize}
\item there exists a coherent $\O_X$-module $\M$ and a short exact sequence \eqref{20},
\item $\pi^{*}\M \cong \SDer_{L^{-1}}(-\log X)$, where $\pi:L^{-1}\to X$ is the projection, and $X$ is identified with the zero section of the line bundle $L^{-1}$,
\item $H^{0}(X,\M\otimes \L^{d})$ computes the homogeneous $k$-linear derivations of degree $d$ of the graded normal section ring $\bigoplus_{i\geq 0}H^{0}(X,\L^{i})$, and,
\item if $\dim X\ge2$ and $d<0$, $H^{0}(X,\M\otimes\L^{d})\cong H^{0}(X,\Theta_X\otimes\L^d)$.
\end{itemize}

Based on this result and a generalization of Zariski's lemma due to Mori and Sumihiro, Wahl gave a cohomological characterization of projective space:
If $X$ is smooth and $H^0(X,\Theta_X\otimes\L^{-1})\ne0$, then $(X,\L)\cong(\PP^n_k,\O_X(1))$ or $(X,\L)\cong(\PP^1_k,\O_X(2))$. 
From his investigations, Wahl derived the conjecture~\cite[Conj.~1.4]{Wah83b} that normal graded isolated singularities do not admit negative degree derivations, at least for \emph{some} choice of grading.
In case of isolated complete intersection singularities (ICIS), where the grading is essentially unique, Aleksandrov~\cite[\S6]{Ale85} studied this conjecture.
His results are affirmative if at least one defining equation has multiplicity at least $3$, but he also gives counter-examples. 

\smallskip

In this article, we give a cohomological interpretation of homogeneous derivations on an arbitrary finitely generated positively graded normal algebra over an arbitrary field $k$. 
Our approach relies on a construction due to Demazure reviewed in \S\ref{13}:
To a $k$-algebra $A$ under consideration, one associates an ample $\QQ$-Cartier $\QQ$-divisor $D$ on $X=\Proj A$, unique up to adding a principal divisor, such that $\O_X(iD)\cong\O_X(i)$ (see \eqref{90}) for any $i\in\ZZ$. 
Conversely, given a normal projective $k$-variety $X$ equipped with an ample $\QQ$-Cartier $\QQ$-divisor $D$, 
the section ring $A=\bigoplus_{i\in\NN}H^0(X,\O_X(iD))$ is a finitely generated positively graded normal $k$-algebra.
These two constructions are mutually inverse, hence any finitely generated positively graded normal $k$-algebra $A$ takes the form $\bigoplus_{i\in\NN}H^0(X,\O_X(iD))$ for an appropriate $\QQ$-Cartier $\QQ$-divisor $D$ on $X=\Proj A$.
Watanabe~\cite{Wat81} described the depth and the Cohen--Macaulay and Gorenstein properties of $A$ in terms of $D$.

\begin{thm}[Generalized Euler Sequence]\label{70}
Let $X$ be a normal projective $k$-variety with singular locus $Z:=X^\sing$, $D$ an ample $\QQ$-Cartier $\QQ$-divisor and $A$ the section ring.
\begin{asparaenum}
\item\label{70a} For any $d\in\ZZ$, there is a coherent reflexive $\O_X$-module $\M_{D,d}$ (see \eqref{72}) on $X$ and divisors $W_{D,d}$ and $L_{D,d}$ (see Definition~\ref{12}). 
They fit into an exact sequence
\begin{equation}\label{73}
\xymat@C=10pt{
0\ar[r]&\O_X(dD+W_{D,d})\ar[r]^-\varphi&\M_{D,d}\ar[r]^-\psi&\SDer_X(dD-\log L_{D,d})\ar[r]&\SH_Z^2(\O_X(dD+W_{D,d})),
}
\end{equation}
where $\SH_Z^\bullet$ is Grothendieck's local cohomology sheaf with support $Z$ and $\SDer_X(dD-\log L_{D,d})$ is a certain sheaf of logarithmic derivations (see Definitions~\ref{29} and \ref{12}).
\item\label{70c} If $\dim X\ge 1$, then the space of global sections $H^0(X,\M_{D,d})$ identifies with the space $\Der_k(A)_d$ of homogeneous degree-$d$ $k$-linear derivations on $A$.
\end{asparaenum}
\end{thm}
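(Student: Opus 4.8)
The plan is to identify both $\Der_k(A)_d$ and $H^0(X,\M_{D,d})$ with one and the same subspace of degree-$d$ derivations of the fraction field of $A$, cut out by pole conditions along the prime divisors of $X$, and then to pass from the smooth locus to all of $X$ by reflexivity. First I would make $\Der_k(A)_d$ completely explicit through Demazure's description of $A$. Writing $K=k(X)$ for the function field and $T$ for the degree-$1$ variable, one has $A_i=H^0(X,\O_X(iD))\,T^i\subseteq K\,T^i$ inside $\mathrm{Frac}(A)=K(T)$. Since $A$ is a domain, every $\delta\in\Der_k(A)_d$ extends uniquely to a homogeneous degree-$d$ derivation of $K(T)$, and by homogeneity such a derivation is determined by a pair $(\theta,c)$ with $\theta\in\Der_k(K)$ and $c\in K$ through the formula $\delta(fT^i)=\bigl(\theta(f)+ic f\bigr)T^{i+d}$. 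The requirement $\delta(A)\subseteq A$ then reads: for every $i\ge 0$ and every $f$ with $\divisor(f)+\lfloor iD\rfloor\ge 0$ one has $\divisor\bigl(\theta(f)+icf\bigr)+\lfloor(i+d)D\rfloor\ge 0$. Thus $\Der_k(A)_d$ is exactly the space of pairs $(\theta,c)$ satisfying this family of divisorial inequalities, and $\delta\mapsto(\theta,c)$ is a $k$-linear injection $\Der_k(A)_d\hookrightarrow\Der_k(K(T))_d$.

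Next I would read these inequalities as local conditions along prime divisors and match them with the sheaves in~\eqref{73}. Each inequality is a condition in the discrete valuation ring $\O_{X,\Gamma}$ attached to a prime divisor $\Gamma$; the $\theta$-part is precisely the logarithmic-pole condition defining $\SDer_X(dD-\log L_{D,d})$ (Definition~\ref{12}), while the constraint on $c$ is membership in $\O_X(dD+W_{D,d})$ (Definition~\ref{29}), the twist $W_{D,d}$ absorbing the scaling factor $i$ against the fractional parts of $iD$. Consequently all three terms of~\eqref{73} are naturally subsheaves of the constant sheaf $\underline{\Der_k(K(T))_d}$ attached to the generic fibre, with $\psi$ the map ``take $\theta$'' and $\varphi$ the inclusion of the pairs with $\theta=0$; and by the very definition~\eqref{72} of $\M_{D,d}$, its stalk at $\Gamma$ consists of exactly those pairs $(\theta,c)$ whose induced derivation meets the above valuation condition at $\Gamma$. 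Restricting to the smooth locus $U:=X\setminus Z$, a global section of $\M_{D,d}|_U$ is therefore a pair $(\theta,c)$ satisfying the divisorial inequalities at every $\Gamma$ meeting $U$.

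Finally I would close up over $Z$. Since $X$ is normal, $Z$ has codimension at least $2$, so \emph{every} prime divisor of $X$ has its generic point in $U$; the conditions tested over $U$ are thus literally the full family of inequalities describing $\Der_k(A)_d$, giving $H^0(U,\M_{D,d}|_U)=\Der_k(A)_d$. As $\M_{D,d}$ is reflexive by part~\ref{70a} and $\codim_X Z\ge 2$, restriction induces an isomorphism $H^0(X,\M_{D,d})\xrightarrow{\ \sim\ }H^0(U,\M_{D,d}|_U)$, and combining the two identifications yields the asserted $H^0(X,\M_{D,d})\cong\Der_k(A)_d$. The hypothesis $\dim X\ge 1$ enters here to guarantee that the grading of $A$ is nontrivial, so that the variable $T$, the identification $\mathrm{Frac}(A)=K(T)$, and the homogeneous decomposition of derivations above are all available.

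I expect the main obstacle to be the middle step: verifying that the intrinsic sheaf $\M_{D,d}$ from~\eqref{72} coincides, stalk by stalk at each prime divisor, with the sheaf of admissible pairs $(\theta,c)$. This amounts to checking that the bookkeeping divisors $L_{D,d}$ and $W_{D,d}$ are calibrated exactly so that membership in the valuation ring $\O_{X,\Gamma}$ is equivalent to $\delta(A)\subseteq A$ in the relevant degrees; the reduction from the global test sections $f\in H^0(X,\O_X(iD))$ to local test elements uses ampleness of $D$ together with the fractional-part combinatorics of $\lfloor iD\rfloor$, and this is where the real work lies. Once this dictionary is in place, the passage from $U$ to $X$ via reflexivity is formal.
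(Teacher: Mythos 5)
Your general frame---identify everything with subspaces of degree-$d$ rational derivations cut out by conditions at codimension-one points, then cross the bad locus by reflexivity---is indeed how the paper's \S\ref{71} is organized, but your pivotal ``dictionary'' step is stated incorrectly, and the part you defer as mere calibration is in fact the entire content of part (1). You claim admissibility of a pair $(\theta,c)$ decouples into two independent conditions: $\theta\in\SDer_X(dD-\log L_{D,d})$ and $c\in\O_X(dD+W_{D,d})$. If that held at every codimension-one point, then $(\theta,c)\mapsto c$ would define a retraction of $\varphi$ (first at codimension-one points, then everywhere by normality of the sheaves involved), so \eqref{73} would split off $\O_X(dD+W_{D,d})$ as a direct summand of $\M_{D,d}$---false already for $X=\PP^n_k$ with $D$ a hyperplane, where \eqref{73} is the classical non-split Euler sequence \eqref{20} (there $\M_{D,0}\cong\O_X(1)^{n+1}$ and $\Hom_{\O_X}(\O_X(1)^{n+1},\O_X)=0$). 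The true local condition couples $\theta$ and $c$: writing $D=\frac pqV$, $V=\divisor(g)$, degree-$i$ admissibility is $\sigma'(b)+b\,(i\alpha'-m_i\sigma'(g)/g)\in g^{-s_i}B$ for all $b$ (the paper's \eqref{14}), with $m_i=\floor{pi/q}$, $s_i=m_{i+d}-m_i$. A pair with admissible $\theta=\sigma'$ and $c=0$ need in general \emph{not} be admissible; rather, $\theta$ lies in the image of $\psi$ exactly when \emph{some} $c=\alpha'$ cancels the pole of $m_i\sigma'(g)/g$, and whether such a cancellation is possible is governed by the arithmetic $pd\equiv-1\bmod q$ and $w\nmid q$, $w=\ch k$ (Lemmas~\ref{36} and~\ref{32}, and the inverse construction \eqref{48}). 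That coupled analysis is precisely what produces $W_{D,d}$ and $L_{D,d}$ and yields the local short exact sequence (Proposition~\ref{47}); deferring it leaves part (1) unproved. You also never address exactness of \eqref{73} at the third term---the paper bounds the cokernel of $\psi$ by $\SH_Z^2(\O_X(dD+W_{D,d}))$ by pushing the short exact sequence on $X^\reg$ forward along $i\colon X^\reg\into X$ and using $R^1i_*i^*=\SH_Z^2$ (Proposition~\ref{23})---nor the coherence and reflexivity of $\M_{D,d}$ itself (Lemma~\ref{24}).

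For part (2), your route genuinely differs from the paper's, which avoids all divisorial bookkeeping: since $A$ is normal and the vertex $\mm$ of $\Gamma^+=\Spec A$ has codimension $\dim X+1\ge2$, reflexivity of $\Theta_{\Gamma^+}$ gives $\Der_kA=H^0(\Gamma^+,\Theta_{\Gamma^+})=H^0(\Gamma,\Theta_\Gamma)=H^0(C,\Theta_C)=H^0(X,\pi_*\Theta_C)$, and one takes degree-$d$ parts (Lemma~\ref{39}). Your version leaves a converse unproved: $\delta(A)\subseteq A$ constrains only global test sections $f\in H^0(X,\O_X(iD))$ with $i\ge0$, while the stalk condition defining $\M_{D,d}$ at a prime divisor quantifies over all local sections and all $i\in\ZZ$; bridging this is not really a matter of ampleness but of the fact that stalks of $\O_X(i)$ are homogeneous localizations of $A$, i.e., exactly the localization argument underlying the paper's cone proof---so, carried out honestly, your route collapses into it. Finally, your explanation of the hypothesis $\dim X\ge1$ is wrong: $T$ and the identification $Q(A)=K(X)(T)$ are available even when $\dim X=0$ (e.g.\ $A=k[T]$), but there the conclusion fails for $d\le-2$, since $\Der_k(k[T])_d=0$ while $H^0(X,\M_{D,d})=k\,T^{d+1}\p_T\neq0$. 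What the hypothesis actually buys is $\codim_{\Gamma^+}\{\mm\}\ge2$---equivalently, in your setup, that conditions along prime divisors suffice to capture $\Der_k(A)_d$, which is exactly what breaks down on a point.
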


\begin{proof}
Proposition~\ref{23} proves \eqref{70a} and Lemma~\ref{39} proves \eqref{70c}.
\end{proof}

\begin{rmk}
\begin{asparaenum}

\item The hypotheses for \S\ref{71} show that $Z$ in Theorem~\ref{70} may be replaced by the locus where some irreducible component of $D$ fails to be Cartier.
However, the local cohomology sheaf in \eqref{73} is only a coarse estimate for the image of $\psi$ in general.

\item In case $A$ is a graded normal surface singularity, the sequence \eqref{73} was described by Wahl \cite[(4.3)~Thm.]{Wah88}.

\item In case $D$ is a Cartier (integer) divisor, the sequence \eqref{73} reduces to the short exact Euler sequence of Wahl (see \cite[Thm.~1.3]{Wah83}) which takes the form \eqref{20}.
Alternatively it can be obtained by adapting our arguments in \S\ref{71} (see Remark~\ref{84}).
Wahl proves surjectivity of $\psi$ using a smoothness argument limited to the case where $D$ is a Cartier divisor (see Remark~\ref{87}).

\item A particular case of the sequence \eqref{73} can be obtained by dualizing the sequence in the proof of \cite[(8.9) Lem.]{Fle81a}.
There is also short exact Euler sequence for toric varieties (see \cite[Thm.~8.1.6]{CLJS11}).

\item There is a vast generalization of the Demazure correspondence due to Altmann, Hausen and S\"uss~\cite{AH06,AHS08}.

\end{asparaenum}
\end{rmk}

\begin{cor}\label{105}
Assume that $\ch k=0$, that $A$ is $(S_3)$, and that $X$ is $(R_2)$ of dimension $\dim X\ge2$.
Then there is a short exact sequence
\begin{equation}\label{95}
\xymat{
0\ar[r]&\O_X(dD)\ar[r]&\M_{D,d}\ar[r]&\SDer_X(dD-\log L_{D,d})\ar[r]&0
}
\end{equation}
whose sequence of global sections is short exact as well. 
In particular, if $d<0$, then 
\[
\Der_k(A)_d\cong H^0(X,\M_{D,d})\cong H^0(X,\SDer_X(dD-\log L_{D,d})).
\]
\end{cor}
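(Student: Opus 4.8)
The plan is to deduce \eqref{95} from the four-term sequence \eqref{73} of Theorem~\ref{70}\eqref{70a}. Three reductions are needed: that the kernel $\O_X(dD+W_{D,d})$ of $\psi$ may be replaced by $\O_X(dD)$; that the obstruction term $\SH_Z^2(\O_X(dD+W_{D,d}))$ vanishes, so that $\psi$ becomes surjective; and that the resulting short exact sequence stays exact on global sections. The first reduction I would read off from the definition \eqref{72} of $W_{D,d}$: under the present depth and codimension hypotheses the correction divisor drops out, so that $\O_X(dD+W_{D,d})=\O_X(dD)$, and I will write $\O_X(dD)$ throughout below. Granting also the vanishing of the local cohomology sheaf, exactness of \eqref{73} at $\SDer_X(dD-\log L_{D,d})$ promotes to surjectivity of $\psi$, and \eqref{95} drops out.

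The heart of the matter is the vanishing $\SH_Z^2(\O_X(dD))=0$. I would invoke the depth-sensitivity of local cohomology (Grothendieck, SGA2): for a coherent sheaf $\G$ one has $\SH_Z^i(\G)=0$ for all $i<n$ if and only if $\depth_{\O_{X,x}}\G_x\ge n$ for every point $x\in Z$. Since $X$ is $(R_2)$, the singular locus satisfies $\codim_X Z\ge 3$, so every $x\in Z$ has $\dim\O_{X,x}\ge 3$; it therefore suffices to prove that the reflexive rank-one sheaf $\O_X(dD)$ satisfies Serre's condition $(S_3)$, for then $\depth_{\O_{X,x}}\O_X(dD)_x\ge\min(3,\dim\O_{X,x})=3$ on $Z$ and $\SH_Z^0=\SH_Z^1=\SH_Z^2=0$. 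The remaining point, propagating $(S_3)$ from the ring $A$ to the divisorial sheaf $\O_X(dD)$, is what I expect to be the main obstacle. I would argue through the Demazure correspondence of \S\ref{13}: a point $x\in X$ corresponds to a homogeneous prime $\pp_x\subset A$ with $\operatorname{ht}\pp_x=\dim\O_{X,x}$, and $\depth A_{\pp_x}$ is governed by the depths of the divisorial stalks $\O_X(nD)_x$; the hypothesis $(S_3)$ at $\pp_x$ then descends to $\depth_{\O_{X,x}}\O_X(dD)_x\ge\min(3,\dim\O_{X,x})$. The subtlety is concentrated at the non-Cartier points of $D$ lying in $Z$: there the stalks $\O_X(nD)_x$ are not free, and one must compare the depth of the graded localization $A_{\pp_x}$, a cyclic-cover cone over $\O_{X,x}$, with the depths of the individual sheaves. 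I expect this comparison to be exactly where $\ch k=0$ is used, since the orders of the fractional parts of $D$ must be invertible for the cover to stay reduced and for depth to be preserved.

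For exactness on global sections I would pass to the long exact cohomology sequence of \eqref{95}; surjectivity of $H^0(X,\M_{D,d})\to H^0(X,\SDer_X(dD-\log L_{D,d}))$ is equivalent to the vanishing of the connecting homomorphism into $H^1(X,\O_X(dD))$, hence it suffices to prove $H^1(X,\O_X(dD))=0$. Here I would use $(S_3)$ at the vertex: as $\dim X\ge 2$ we have $\dim A=\dim X+1\ge 3$, so $(S_3)$ gives $\depth A_\mm\ge 3$ and thus $H_\mm^2(A)=0$; the standard comparison of local cohomology at the irrelevant ideal with sheaf cohomology on $\Proj A$ yields $H^1(X,\O_X(nD))\cong H_\mm^2(A)_n=0$ for all $n\in\ZZ$, in particular for $n=d$.

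Finally, for $d<0$ I would observe that $H^0(X,\O_X(dD))=A_d=0$, the graded piece of the positively graded ring $A$ vanishing because $dD$ is not $\QQ$-effective for the ample divisor $D$ with $d<0$. Substituting $H^0(X,\O_X(dD))=0$ and $H^1(X,\O_X(dD))=0$ into the now short exact sequence of global sections gives $H^0(X,\M_{D,d})\cong H^0(X,\SDer_X(dD-\log L_{D,d}))$, and combining with $H^0(X,\M_{D,d})\cong\Der_k(A)_d$ from Theorem~\ref{70}\eqref{70c} yields the stated isomorphisms.
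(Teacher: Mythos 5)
Your proposal follows the same route as the paper's own proof (Proposition~\ref{19}): reduce the four-term sequence \eqref{73} to \eqref{95} by showing $W_{D,d}=0$ and $\SH_Z^2(\O_X(dD))=0$, and then obtain exactness of global sections from $H^1(X,\O_X(dD))=0$, which you derive --- exactly as in Watanabe's \cite[Cor.~2.3]{Wat81}, cited by the paper --- from $\depth A\ge3$ and the comparison $H^1(X,\O_X(n))\cong H^2_\mm(A)_n$. The treatment of $d<0$ via $A_d=0$ and Theorem~\ref{70}\eqref{70c} is also correct. One small but real error along the way: $W_{D,d}=0$ has nothing to do with ``depth and codimension hypotheses''. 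By Definition~\ref{12} (not \eqref{72}), a component $V$ contributes to $W_{D,d}$ only when $w\mid q_V$ for $w=\ch k$; since here $w=0$ and $q_V>0$, this is impossible, so $W_{D,d}=0$ follows from $\ch k=0$ alone.

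The genuine gap is the step you yourself flag as ``the main obstacle'': propagating $(S_3)$ from $A$ to the divisorial sheaves $\O_X(dD)\cong\O_X(d)$. This is exactly Lemma~\ref{31} of the paper, and your proposal replaces its proof by an expectation. It is not a routine localization statement: on an affine chart $D_+(f)$, with $f\in A_e$ a nonzerodivisor, one must pass from the ring $B:=A/\ideal{f-1}$ --- and already the fact that $B$ inherits $(S_\ell)$ from $A$ is a theorem of Flenner \cite{Fle81b} --- to its eigenspaces $B^{\bar\imath}=(A_f)_i$ under the cyclic group action, and show that each eigenspace has depth at least $\min\{\ell,\dim\}$ locally on $\Spec B^{\bar 0}=D_+(f)$. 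The paper does this by proving that the primes of $B$ over a maximal ideal $\mm$ of $B^{\bar 0}$ are conjugate under the group (prime avoidance plus norms), then descending regular sequences from $\mm B\subset B$ to $\mm\subset B^{\bar 0}$ via norms and the Reynolds operator --- which is where $\ch k=0$ actually enters, rather than through any ``reducedness of the cover'' --- and finally observing that a $B$-regular sequence in $\mm$ is regular on each direct summand $B^{\bar\imath}$. Until you supply this argument or an equivalent one, the vanishing $\SH_Z^2(\O_X(dD))=0$, hence the surjectivity of $\psi$ and the very existence of the short exact sequence \eqref{95}, remains unproven.
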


\begin{proof}
The proof is given in Proposition~\ref{19}.
\end{proof}

\begin{draft}

\begin{cor}\label{106}
In addition to the hypotheses of Corollary~\ref{105}, assume that $k$ is perfect and that $A$ has an isolated singularity. 
Then $T^1_{A/k,d}\subset H^1(X,\SDer_X(dD-\log L_{D,d}))$.\qed.
\end{cor}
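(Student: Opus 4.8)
The plan is to identify $T^1_{A/k}$ with a local cohomology module supported at the irrelevant maximal ideal $\mm=A_+$ and then to transport it to $X=\Proj A$ through the Serre--Grothendieck comparison between local and sheaf cohomology. Since $k$ is perfect I may write $A=P/I$ with $P$ a weighted polynomial ring, smooth over $k$; the standard cotangent (Lichtenbaum--Schlessinger) computation then supplies the four--term exact sequence of graded $A$--modules
\[
0\to\Der_k(A)\to F\xrightarrow{\ \alpha\ }G\to T^1_{A/k}\to0,\qquad F:=\Der_k(P)\otimes_PA,\quad G:=\Hom_A(I/I^2,A),
\]
which I split into $0\to\Der_k(A)\to F\to N\to0$ and $0\to N\to G\to T^1_{A/k}\to0$ with $N:=\operatorname{im}\alpha$. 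Because $A$ has an isolated singularity, $T^1_{A/k}$ is supported at $\mm$, hence of finite length, so that $H^0_\mm(T^1_{A/k})=T^1_{A/k}$.

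First I would peel $T^1_{A/k}$ off the module $N$. The dual $G=\Hom_A(I/I^2,A)$ satisfies Serre's condition $(S_2)$ because $A$ is normal, and $\dim A=\dim X+1\ge3$, so $H^0_\mm(G)=H^1_\mm(G)=0$. Substituting this into the local cohomology sequence of $0\to N\to G\to T^1_{A/k}\to0$ yields an isomorphism $T^1_{A/k}\cong H^1_\mm(N)$ of graded modules. Next I relate $N$ to the Euler cokernel $Q:=\Der_k(A)/A\cdot E$, where $E$ is the Euler derivation; since $\ch k=0$ the map $a\mapsto aE$ is an injection $A\into\Der_k(A)$ of degree $0$, and by the construction of the sequence \eqref{95} in Proposition~\ref{19} its degree-$d$ sheafification is exactly $\O_X(dD)\into\M_{D,d}$ with cokernel $\SDer_X(dD-\log L_{D,d})=\wt Q_d$. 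From the chain $A\into\Der_k(A)\into F$ I obtain the short exact sequence $0\to Q\to F/A\to N\to0$.

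The heart of the argument is a single vanishing, $H^1_\mm(F/A)=0$. Indeed $F$ is $A$--free, and the $(S_3)$ hypothesis forces $\depth A\ge\min(3,\dim A)=3$, so $H^1_\mm(F)=0$ and $H^2_\mm(A)=0$; the local cohomology sequence of $0\to A\to F\to F/A\to0$ then gives $H^1_\mm(F/A)=0$. Consequently the connecting homomorphism $\delta\colon H^1_\mm(N)\to H^2_\mm(Q)$ attached to $0\to Q\to F/A\to N\to0$ is injective. Chaining the identifications produces $T^1_{A/k}\cong H^1_\mm(N)\overset{\delta}{\into}H^2_\mm(Q)$, and passing to the degree-$d$ part together with the comparison isomorphism $H^2_\mm(Q)_d\cong H^1(X,\wt Q_d)=H^1\bigl(X,\SDer_X(dD-\log L_{D,d})\bigr)$ gives the asserted inclusion.

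The hard part is not the diagram chase but the two pieces of framework I am borrowing: that the Euler sequence $0\to A\to\Der_k(A)\to Q\to0$ really does sheafify in degree $d$ to the sequence \eqref{95}, so that $\wt Q_d=\SDer_X(dD-\log L_{D,d})$, and that the Serre--Grothendieck comparison $H^{i+1}_\mm(M)_d\cong H^i(X,\wt M_d)$ survives for the merely $\QQ$--graded ring $A$ with its reflexive twisting sheaves $\O_X(dD)$. Granting the $\QQ$--divisor formalism already in force for Theorem~\ref{70} and Corollary~\ref{105}, every remaining step is controlled by the two depth inputs --- the $(S_2)$ property of the dual $G$ and the $(S_3)$ property of $A$ --- which are precisely what make the relevant connecting maps injective.
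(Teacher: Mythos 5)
Your argument is correct, and every step goes through under the hypotheses inherited from Corollary~\ref{105}. Bear in mind that the paper's own ``proof'' is a one-line pointer to the proof of Theorem~5.4 in Hartshorne's \emph{Deformation Theory}, so the real comparison is with Hartshorne's cone argument, which the authors intend to transplant to the $\QQ$-divisor setting. That argument stays in sheaf cohomology on $X$: one sheafifies the four-term sequence $0\to\Der_k(A)\to F\to G\to T^1_{A/k}\to 0$ degree by degree (the finite-length module $T^1_{A/k}$ sheafifies to zero), so that $T^1_{A/k,d}=\operatorname{coker}(F_d\to G_d)$ injects into $H^1(X,\M_{D,d})$ by saturatedness (reflexivity) of $F$ and $G$, and then into $H^1(X,\SDer_X(dD-\log L_{D,d}))$ using \eqref{95} and $H^1(X,\O_X(dD))=0$. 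You instead stay on $\Spec A$ with graded local cohomology: $T^1_{A/k}\cong H^1_\mm(N)$ via reflexivity of $G$, then $H^1_\mm(N)\into H^2_\mm(Q)$ via the Euler quotient $Q=\Der_k(A)/AE$ and the vanishing $H^1_\mm(F/AE)=0$, converting to $X$ only at the last step. The two routes consume identical inputs --- the cotangent presentation, $\depth A\ge 3$ (equivalently $H^2_\mm(A)=0$, equivalently $H^1(X,\O_X(d))=0$ by Watanabe), reflexivity of the normal module, and the paper's generalized Euler sequence --- and are intertranslatable through $H^{i+1}_\mm(M)\cong\bigoplus_d H^i(X,\wt{M(d)})$ for $i\ge1$; what your packaging buys is that the paper's sheaf theory enters only once, through Corollary~\ref{105}, which is exactly what justifies $\wt{Q(d)}\cong\SDer_X(dD-\log L_{D,d})$ (the map $\varphi$ in \eqref{95} is multiplication by $T\p_T$, which \emph{is} the Euler derivation, and sheafification of graded modules is exact), at the mild price of the auxiliary modules $N$ and $Q$. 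One cosmetic slip: perfectness of $k$ is not what allows the presentation $A=P/I$ with $P$ smooth (any field does that); it is what makes ``isolated singularity'' imply that $T^1_{A/k}$ is supported at $\mm$, since regular equals smooth over a perfect field --- and as $\ch k=0$ is already in force from Corollary~\ref{105}, this is automatic anyway.
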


\begin{proof}
This is proved along the lines of \cite[Proof of Thm.~5.4]{Har10}.
\end{proof}

\end{draft}

\begin{cor}\label{107}
If $X$ is regular, $W_{D,d}=0$, and $L_{D,d}$ a free divisor in the sense of Saito \cite{Sai80}, then the extension class of the short exact sequence \eqref{95} is given by $\varepsilon_D\in H^1(X,\Omega_X^1(\log L_{D,d}))$ (see \eqref{93}).
\end{cor}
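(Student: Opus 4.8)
The plan is to first identify the group in which the extension class of \eqref{95} lives and to match it with the target of $\varepsilon_D$, and then to compute the class by a \v{C}ech cocycle built from local splittings. Since $X$ is regular and $L_{D,d}$ is a free divisor, the sheaf $\SDer_X(-\log L_{D,d})$ is locally free, hence so is the quotient term $\SDer_X(dD-\log L_{D,d})$ of \eqref{95}; in particular the local-to-global $\Ext$ spectral sequence collapses and the extension class lies in
\[
\Ext^1\bigl(\SDer_X(dD-\log L_{D,d}),\O_X(dD)\bigr)\cong H^1\bigl(X,\SHom(\SDer_X(dD-\log L_{D,d}),\O_X(dD))\bigr)\cong H^1\bigl(X,\Omega_X^1(\log L_{D,d})\bigr).
\]
Here $W_{D,d}=0$ together with regularity of $X$ makes $\O_X(dD)$ a line bundle, so the twist by $\O_X(dD)$ cancels inside the $\SHom$, and the last isomorphism is Saito's duality between logarithmic derivations and logarithmic differentials for a free divisor. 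This already places $\varepsilon_D$ and the extension class in the same group and reduces the statement to a cocycle computation.

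Next I would represent the extension class explicitly. On an affine open cover $\{U_i\}$ of $X$ trivializing $\O_X(dD)$ by local generators $h_i$, with transition functions $g_{ij}=h_i/h_j$, the explicit presentation of $\M_{D,d}$ in \eqref{72} provides local splittings $s_i$ of the surjection $\psi$ over each $U_i$. On overlaps $s_i-s_j$ is annihilated by $\psi$, hence factors through $\varphi$ as $\varphi\circ\lambda_{ij}$ for a unique section $\lambda_{ij}$ of $\SHom(\SDer_X(dD-\log L_{D,d}),\O_X(dD))$ over $U_{ij}$. The \v{C}ech $1$-cocycle $\{\lambda_{ij}\}$ represents the extension class, and under the identification above it is a cocycle with values in $\Omega_X^1(\log L_{D,d})$.

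Finally I would match $\{\lambda_{ij}\}$ with the definition of $\varepsilon_D$ in \eqref{93}. Unwinding the construction of $\varphi$ from \S\ref{71}, which encodes the Euler derivation attached to the local equation $h_i$, the logarithmic one-form corresponding to $\lambda_{ij}$ should come out to be the logarithmic derivative of the transition function, $d\log g_{ij}=d\log h_i-d\log h_j$, which is exactly the \v{C}ech representative of $\varepsilon_D$. I expect the main obstacle to be this last identification: it requires making the local splittings $s_i$ and the Euler map $\varphi$ fully explicit in terms of \eqref{72}, and then tracing them through Saito's duality pairing so that the abstract cocycle $\{\lambda_{ij}\}$ is recognized as $\{d\log g_{ij}\}$. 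Additional care is needed because $D$ is only $\QQ$-Cartier: the local equations $h_i$ absorb the rounding implicit in $\O_X(dD)$, and one must check that the logarithmic poles produced along $L_{D,d}$ are precisely those accounted for by the free-divisor structure, so that the cocycle indeed lands in $H^1(X,\Omega_X^1(\log L_{D,d}))$ rather than in a sheaf with larger polar locus.
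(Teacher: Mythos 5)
Your reduction to a cocycle computation is sound, and it is in fact the route the paper takes: regularity of $X$ makes $\floor{dD}$ Cartier, so by Lemma~\ref{34} the quotient term of \eqref{95} is $\SDer_X(-\log L_{D,d})(dD)$, which is locally free because $L_{D,d}$ is free; the local-to-global sequence \eqref{99} then places the extension class in $H^1(X,\SHom(\SDer_X(dD-\log L_{D,d}),\O_X(dD)))\cong H^1(X,\Omega_X^1(\log L_{D,d}))$, and the class is computed from differences of local splittings (equivalently, as in \eqref{96}, by lifting $\id$ locally and applying the \v{C}ech differential).

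However, the step you yourself flag as the main obstacle is exactly where your proposal fails, and not merely in presentation. The local splittings of \eqref{95} are not attached to trivializations $h_\alpha$ of the line bundle $\O_X(dD)=\O_X(\floor{dD})$; they are the lifts \eqref{92} from the proof of Proposition~\ref{47}, namely $\sigma\mapsto\sigma+\alpha T\p_T$ with $\alpha=\sum\frac{p_i}{q_i}\,\sigma(g_i^{(\alpha)})/g_i^{(\alpha)}$, where the sum runs only over the components $V_i$ of $D$ with $p_id\equiv-1\bmod q_i$ and $g_i^{(\alpha)}$ is a local equation of the individual component $V_i$. Consequently the difference cocycle is \eqref{93}: $(\varepsilon_D)_{\alpha,\beta}=\sum_{p_id\equiv-1\bmod q_i}\frac{p_i}{q_i}\bigl(d\log g_i^{(\beta)}-d\log g_i^{(\alpha)}\bigr)$, a sum over only the components allowed by the congruence, with genuinely rational weights $p_i/q_i$. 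This is \emph{not} $d\log$ of the transition functions of $\O_X(dD)$, which would instead be $\sum_i\floor{dp_i/q_i}\bigl(d\log g_i^{(\beta)}-d\log g_i^{(\alpha)}\bigr)$ over all components. The two disagree already in the classical case: for $D$ integral (all $q_i=1$, so the congruence is vacuous and $L_{D,d}=0$) and $d=0$, your recipe gives the zero cocycle, predicting that Wahl's Euler sequence \eqref{20} splits, which is false for $\PP^n_k$, where the class is $c_1(\O_X(1))\neq0$; in the genuinely fractional case one can even have $\floor{dD}=0$ while $\varepsilon_D\neq0$. So the extension class is governed by the unrounded coefficients $p_i/q_i$ of $D$, filtered by the congruence $p_id\equiv-1\bmod q_i$, and not by the rounded divisor $\floor{dD}$; the rounding you propose to ``absorb into the $h_i$'' is precisely what erases the data that $\varepsilon_D$ records. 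With the splittings \eqref{92} put in place of your trivializations $h_\alpha$, the rest of your plan does go through and reproduces the paper's Proposition~\ref{94}.
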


\begin{proof}
The proof is given in Proposition~\ref{94}.
\end{proof}

We present two examples:

In \S\ref{38a}, we consider an ICIS counter-example to Wahl's conjecture found by Michel Granger and the second author (see \cite{GS14}).
We describe the associated  projective variety $X$ and its singularities and apply our results to interpret a degree-$-1$ derivation as a twisted vector field on $X$.

In \S\ref{38b}, we study the degree-$-1$ derivations on $k[x,y,z]/\ideal{xy-z^2}$ with $x,y,z$ all of degree $1$. 
Notice that if $\ch(k)\neq 2$ there is no such derivation, while if $\ch(k)=2$, $\p_z$ is one such derivation. 
We will give a cohomological explanation of this phenomenon.

\subsection*{Acknowledgements}

We are grateful to Jonathan Wahl for stimulating questions and useful hints.

\section{Demazure's construction}\label{13}

Demazure~\cite{Dem88} established a correspondence between 
\begin{enumerate}[(A)]
\item\label{11a} finitely generated positively graded normal algebras $A$ over a field $k$ and 
\item\label{11b} normal projective $k$-varieties $X$ equipped with an ample $\QQ$-Cartier $\QQ$-divisor defined up to ($\ZZ$-)linear equivalence.
\end{enumerate}

In the following, we briefly summarize the main features of this correspondence.

\begin{asparaenum}
\item[\eqref{11b}$\leadsto$\eqref{11a}.]

We start with an ample $\QQ$-Cartier $\QQ$-divisor $D$ on a normal projective $k$-variety $X$. 
More specifically, this condition means that we can write $D$ as a finite sum over prime Weil divisors $V$ (codimension-$1$ subvarieties of $X$)
\begin{equation}\label{88}
D=\sum_Vr_V\cdot V,\quad r_V=\frac{p_V}{q_V}\in\QQ,
\end{equation}
where $p_V$ and $q_V>0$ are coprime integers for each $V$, and there exists a positive integer $N$ such that $ND$ is an ample Cartier (integer) divisor.
For any $k$-variety $X$, let $K(X)$ denote the space of rational functions and $\K_X$ the corresponding constant sheaf.
With $\floor{-}$ being the coefficient-wise round-down, one defines
\begin{equation}\label{90}
\O_X(D):=\O_X(\floor{D})
\end{equation}
in the usual way as a coherent reflexive $\O_X$-submodule of $\K_X$.
Consider the following quasicoherent graded $\O_X$-algebra
\begin{equation}\label{62}
\A:=\bigoplus_{i\in\ZZ}\A_i\supset\bigoplus_{i\in\NN}\A_i=:\A^+.
\end{equation}
defined by
\[
\A_i:=\O_X(iD)T^i.
\]
The inclusion \eqref{62} yields an open embedding of cylinders
\begin{equation}\label{63}
\xymat{
C:=\Spec_X\A\ar@{^(->}[r]&\Spec_X\A^+=:C^+\ar[r]^-\pi&X
}
\end{equation}
over $X$ whose complement is the image of the zero-section of $C^+\to X$ defined by $\A^+\to\A_0$, which is isomorphic to $X$ (see \cite[2.2~Lem.]{Dem88}).

\begin{rmk}\label{87}
In case $D$ is a Cartier (integer) divisor, $\O_X(D)$ is the sheaf of sections of an ample line bundle $L$ on $X$.
The cylinder $C^+$ then becomes the total space of the line bundle $L^{-1}$ (see \cite[(1.2)]{Wah83}) and the morphism $\pi$ is smooth.
This was used by Wahl to construct the short exact sequence \eqref{20} in this case (see \cite[Proof of Thm.~1.3]{Wah83}).

In general $\pi$ is smooth if and only if it is flat with geometrically regular fibers.
By \cite[2.8~Prop.]{Dem88}, the latter enforces $q_V=1$ in \eqref{88}.
Moreover, flatness of $\A$ over $\O_X$ means that $\A_i\cong\O_X(iD)$ is locally free for all $i\in\ZZ$.
Therefore smoothness of $\pi$ occurs exactly in the case $D$ is a Cartier (integer) divisor.
\end{rmk}

The desired positively graded $k$-algebra associated with $X$ and $D$ is
\[
A:=H^0(X,\A^+)=\bigoplus_{i\in\NN}A_i,\quad A_i:=H^0(X,\O_X(iD))T^i.
\]
The restriction morphisms $A\to H^0(U,\A^+)$ glue to a canonical morphism
\begin{equation}\label{10}
C^+ \to \Spec A=:\Gamma^+,
\end{equation}
contracting the image of the zero-section of $C^+\to X$ to the vertex $\mm$ of the cone $\Gamma^+$.
It induces an isomorphism 
\begin{equation}\label{27}
C\cong\Gamma:=\Spec A\setminus\{\mm\}
\end{equation}
of the open cylinder with the open cone and, by passing to the $\GG_m$-quotient, an isomorphism
\[
X\cong\Proj A, 
\]
where $A$ is finitely generated (see \cite[3.2~Prop., 3.3~Prop.]{Dem88}).
In particular, this gives an identification (see \cite[Cor.(3.2)]{Dem88}).
\begin{equation}\label{51}
K(C)=K(X)(T)=Q(A).
\end{equation}
The normality of $\Gamma^+$ and hence $A$ follows using \eqref{27} from \cite[2.7~Lem.]{Dem88} and \cite[Cor.~(2.3)]{Wat81}, together with Serre's normality criterion.
Alternatively, it can be verified directly by checking that $\A$ is a sheaf of integrally closed rings.

\item[\eqref{11a}$\leadsto$\eqref{11b}.]

Assume now conversely given a finitely generated positively graded normal $k$-algebra $A$.
Then one obtains a normal projective $k$-variety by setting
\[
X:=\Proj A.
\] 
Without loss of generality, we may assume that the degrees of the generators are coprime.
For $i\in\ZZ$, denote as usual by 
\[
\O_X(i):=\wt{A(i)}
\]
the coherent sheaf on $X$ associated with the graded module $A(i)$. 
Taking $\A_i:=\O_X(i)$, one obtains quasicoherent graded $\O_X$-algebras $\A\supset\A^+$ and cylinders $C\subset C^+$ as in \eqref{62} and \eqref{63}.
By assumption on the degrees of the generators of $A$, there is a homogeneous rational function $T\in Q(A)$ of degree $1$. 
By the way $C$ is defined, $T\in K(C)$ and the homogeneity of $T$ means that the associated principal divisor $\divisor(T)$ on $C$ is $\GG_m$-stable.
It then comes from a $\QQ$-divisor $D$ on $X$ (see \cite[3.5~Thm.]{Dem88}),
\[
\divisor(T)=\pi^*(D).
\]
\end{asparaenum}
A different choice of $T$, say $T'$, will result in a different divisor $D$, say $D'$, on $X$. 
However, $T/T'\in K(X)(T)$, being homogeneous of degree $0$, implies that $T/T'$ defines an $f\in K(X)$.
Thus, $D$ and $D'$ differ by a principal divisor,
\begin{equation}\label{65}
D=D'+\divisor(f).
\end{equation}
From the normality of $A$, one deduces the following equalities (see \cite[3.5~Thm.]{Dem88})
\begin{equation}\label{64}
A_i=H^0(X,\O_X(iD))T^i,\quad\O_X(i)=\O_X(iD)T^i,\quad\text{for all }i\in\ZZ,
\end{equation}
of subspaces of $K(C)$ and subsheaves of $\pi_*\K_C$ respectively.
We denote the $k$-th Veronese subring of $A$ by 
\[
A^{(k)}:=\bigoplus_{i\in\NN}A_{ik}.
\]
Since $A$ is finitely generated there exists an $N\ge1$ such that, for all $i\ge1$, $A^{(iN)}$ is generated over $A_0$ by $A^{(iN)}_1=A_{iN}$ (see \cite[Ch.~III, \S1, Prop.~3]{Bou98}).
Combined with the second equality in \eqref{64}, this shows that $iND$ is a very ample divisor for those $i\ge1$ which clear the denominators of $ND$.

\section{Reflexive sheaves and rational derivations}\label{69}

We now discuss the basic idea of this paper. 
We adopt the setup of Demazure's construction (see \S\ref{13}) and use the following list of data freely in the sequel:
\begin{itemize}
\item a finitely generated positively graded normal $k$-algebra $A$ with coprime degrees of generators,
\item a corresponding ample $\QQ$-Cartier $\QQ$-divisor $D$ on $X=\Proj A$,
\item a homogeneous $T\in K(C)$ of degree $1$ with $\divisor(T) = \pi^{-1}(D)$,
\item the graded $\O_X$-algebra $\A=\bigoplus_{i\in\ZZ}\O_X(iD)T^{i}=\bigoplus_{i\in\ZZ}\O_X(i)$,
\item the cylinder $\pi:C=\Spec_X(\A)\to X$ over $X$ and the cone $\Gamma^+=\Spec A$.
\end{itemize}

Recall that a coherent $\O_X$-module $\F$ is called reflexive if the canonical map
\[
\F\to\F^{\vee\vee}
\]
to the double-dual is an isomorphism.
In other words, reflexive $\O_X$-modules are duals of coherent $\O_X$-modules (see \cite[Cor.~1.2]{Har80}).
In particular, the tangent sheaf
\begin{equation}\label{28}
\Theta_X:=\SDer_k\O_X=(\Omega^1_{X/k})^\vee
\end{equation}
is a reflexive $\O_X$-module for any normal $k$-variety $X$.
An $\O_X$-module $\F$ is called normal if the restriction map
\[
\F(U)\to\F(U\setminus Y)
\]
is bijective for all $U\subset X$ open and $Y\subset U$ closed of codimension $\ge2$.

\begin{lem}[{\cite[Prop.~1.6]{Har80}}]\label{30}
A coherent $\O_X$-module $\F$ is reflexive if and only if it is torsion-free and normal.\qed
\end{lem}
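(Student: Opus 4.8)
The statement I need to establish is Lemma~\ref{30}, namely Hartshorne's characterization: a coherent $\O_X$-module $\F$ is reflexive if and only if it is torsion-free and normal. Since this is quoted directly from \cite[Prop.~1.6]{Har80}, my plan is to reconstruct the natural proof, which hinges on understanding the kernel and cokernel of the canonical map $\F\to\F^{\vee\vee}$ and relating the double-dual to sections over the complement of a codimension-$2$ locus.

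The plan is to prove the two implications separately, using the fact (from the excerpt) that reflexive modules are exactly duals of coherent modules. First I would prove the forward direction: suppose $\F$ is reflexive. Torsion-freeness is the easy half, since any dual $\SHom(\G,\O_X)$ injects into a product of copies of the function field $K(X)$ over the generic points, so it has no torsion; as $\F\cong\F^{\vee\vee}$ is a dual, it is torsion-free. For normality, I would use the key sheaf-theoretic fact that on a normal variety $X$, the double-dual $\F^{\vee\vee}$ can be computed as the pushforward $j_*j^*\F$ where $j\colon U\into X$ is the inclusion of the complement of any closed subset $Y$ of codimension $\ge2$. This is because $\SHom$ sheaves are determined in codimension $1$ (depth considerations), so duals automatically satisfy the $(S_2)$ condition. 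Thus for reflexive $\F$ and any such $Y$, the restriction $\F(U)\to\F(U\setminus Y)$ is bijective, which is precisely normality.

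For the converse, suppose $\F$ is torsion-free and normal, and I want to show $\F\to\F^{\vee\vee}$ is an isomorphism. Torsion-freeness gives injectivity of this map (the kernel is supported on the torsion). For surjectivity, I would argue locally: let $\iota\colon\F\to\F^{\vee\vee}$ and consider its cokernel. Since $\F^{\vee\vee}$ is a dual, it is reflexive, hence by the forward direction already normal; and $\F$ is normal by hypothesis. The comparison happens over the locus where $\F$ is already known to be locally free, which is an open set $U$ whose complement has codimension $\ge2$ because $\F$ is torsion-free on a normal variety (the non-free locus of a torsion-free sheaf sits in codimension $\ge2$ after removing the codimension-$1$ part handled by reflexivity of $\O_X$). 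On $U$ the map $\iota$ is an isomorphism, and then normality of both source and target forces $\iota$ to be an isomorphism on all of $X$: taking sections over any open $V$, the restriction to $V\cap U$ is bijective on both sides by normality, and the map is an isomorphism there.

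\emph{Main obstacle.} The crux, and the step I expect to require the most care, is the identification of the double-dual with the pushforward $j_*j^*\F$ from the complement of a codimension-$\ge2$ set, together with the claim that the non-locally-free locus of a torsion-free sheaf on a normal variety has codimension $\ge2$. This is where the normality of $X$ enters essentially (via the $(S_2)$ property and the fact that $\O_X$ is a reflexive, hence normal, $\O_X$-module on which duals are built). Once this codimension estimate is in hand, both implications reduce to the formal observation that a sheaf is normal precisely when it equals the pushforward of its restriction to the complement of every codimension-$\ge2$ locus, matching the definition given just before the lemma.
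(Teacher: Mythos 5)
Your proposal is correct: the paper itself offers no proof of Lemma~\ref{30} (it is quoted from \cite[Prop.~1.6]{Har80} with a \qed), and your reconstruction is essentially Hartshorne's own argument — duals are torsion-free and normal (via the $(S_2)$/algebraic-Hartogs property of $\O_X$ on a normal variety), and conversely a torsion-free normal sheaf agrees with its double dual because both are normal and they coincide on the locally free locus, whose complement has codimension $\ge2$ by the DVR argument at codimension-$1$ points. No gaps; this matches the cited source.
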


The grading on $\pi_*\O_C=\bigoplus_{i\in\ZZ}\O_X(iD)T^{i}$ induces a grading on the $\O_X$-module
\begin{equation}\label{72}
\M_D:=\pi_*\Theta_C=\SDer_k\pi_*\O_C=\bigoplus_{d\in\ZZ}\M_{D,d},\quad\M_{D,d}=(\pi_*\Theta_C)_d.
\end{equation}

The following two lemmas establish the properties of $\M_{D,d}$ stated in Theorem~\ref{70}.

\begin{lem}\label{24}
The sheaves $(\pi_*\Theta_C)_d$ are coherent reflexive $\O_X$-modules.
\end{lem}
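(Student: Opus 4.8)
The plan is to establish reflexivity through Lemma~\ref{30}, i.e. by checking that each graded piece $\M_{D,d}=(\pi_*\Theta_C)_d$ is coherent, torsion-free, and normal. The foundation is that $C$ is itself a normal variety: by \eqref{27} we have $C\cong\Gamma=\Spec A\setminus\{\mm\}$, an open subscheme of the normal affine $\Gamma^+=\Spec A$, and since $A$ is finitely generated over $k$, the scheme $C$ is of finite type over $k$. Hence $\Theta_C=(\Omega^1_{C/k})^\vee$ is a coherent reflexive $\O_C$-module, and, $C$ being integral, it embeds into its generic stalk $\Der_k(K(C))$. I will exploit this reflexivity on $C$ and transport it through the graded pushforward.

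For torsion-freeness I would use the grading together with \eqref{51}. Writing $K(C)=K(X)(T)$ with $T$ transcendental over $K(X)$, the space $\Der_k(K(C))$ is generated over $K(C)$ by the vector fields of $K(X)$ (extended by killing $T$) and by $\partial_T$; sorting by $\GG_m$-degree, each $\Der_k(K(C))_d$ is a finite-dimensional $K(X)$-vector space. Since $\pi$ is affine, $\pi_*\Theta_C$ is quasicoherent, and $\M_{D,d}$ is the subsheaf of the constant sheaf associated with $\Der_k(K(C))_d$ formed by those rational degree-$d$ derivations preserving $\A$ over each open. Being a subsheaf of a $K(X)$-vector space, $\M_{D,d}$ is torsion-free.

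The main obstacle is coherence, which fails for the ungraded $\pi_*\Theta_C$ (it equals $\A$ on $\O_C$ and has infinite rank) precisely because $\A$ need not be generated in degree one over $\O_X$ when $D$ is a genuine $\QQ$-divisor; it nevertheless holds degree-wise, and I would argue locally. Cover $X$ by affine opens $U=\Spec R$ on which the Cartier divisor $ND$ is principal. Then $\A(U)=\bigoplus_i\O_X(iD)(U)T^i$ is a finitely generated $R$-algebra, generated by finitely many homogeneous elements $g_1,\dots,g_m$ (the reciprocal powers of a local equation of $ND$ together with generators of the fractional parts in degrees $0,\dots,N-1$). A homogeneous degree-$d$ derivation is determined by the values $\delta(g_j)\in\A(U)_{\deg g_j+d}$, so evaluation on the $g_j$ yields an $R$-linear injection
\[
\M_{D,d}(U)\into\bigoplus_{j=1}^m\A_{\deg g_j+d}(U).
\]
The target is finitely generated over the Noetherian ring $R$, hence so is the submodule $\M_{D,d}(U)$; letting $U$ vary exhibits $\M_{D,d}$ as a quasicoherent subsheaf of a coherent $\O_X$-module, hence coherent.

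Finally I would verify normality by descending from $C$. Given $U\subset X$ open and $Y\subset U$ closed of codimension $\ge2$, one has $\dim\pi^{-1}Y=\dim Y+1$ and $\dim C=\dim X+1$, so $\pi^{-1}Y\subset\pi^{-1}U$ has codimension $\ge2$ in $C$. Since $\Theta_C$ is reflexive, hence normal by Lemma~\ref{30} applied on $C$, the restriction $\Theta_C(\pi^{-1}U)\to\Theta_C(\pi^{-1}U\setminus\pi^{-1}Y)$ is bijective; as taking degree-$d$ parts commutes with restriction, the map $\M_{D,d}(U)\to\M_{D,d}(U\setminus Y)$ is bijective as well, so $\M_{D,d}$ is normal. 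Combining coherence, torsion-freeness, and normality, Lemma~\ref{30} yields that each $\M_{D,d}$ is reflexive.
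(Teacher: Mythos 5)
Your overall strategy is the same as the paper's: verify the hypotheses of Lemma~\ref{30}, and descend normality (hence reflexivity) from $\Theta_C$ to the graded pieces using the fact that $\pi^{-1}$ preserves codimension. Your torsion-freeness and normality arguments are sound; note only that the dimension count $\dim\pi^{-1}Y=\dim Y+1$ which you assert without justification is precisely Demazure's fiber description $\pi^{-1}(y)^{\red}\cong\Spec\kappa(y)[t,t^{-1}]$, which is what the paper cites at this point.

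The genuine gap is the injectivity claim in your coherence step. Sections of $(\pi_*\Theta_C)_d$ over $U=\Spec R$ are homogeneous degree-$d$ \emph{$k$-linear} derivations of $\A(U)$, not $R$-linear ones, so such a derivation is \emph{not} determined by its values on the $R$-algebra generators $g_1,\dots,g_m$ of $\A(U)$: it can kill every $g_j$ and still act nontrivially on $R=\A(U)_0$. Concretely, if $U$ is disjoint from the support of $D$, then $\A(U)=R[T,T^{-1}]$ with $R$-algebra generators $T,T^{-1}$, and for any nonzero $\sigma\in\Der_k(R)$ the degree-$0$ derivation extending $\sigma$ by $\sigma(T)=0$ is a nonzero element of the kernel of your evaluation map, so the map $\M_{D,d}(U)\into\bigoplus_j\A(U)_{\deg g_j+d}$ is not injective. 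The repair is immediate: since $R$ is a finitely generated $k$-algebra, adjoin finitely many $k$-algebra generators $r_1,\dots,r_n$ of $R$ to the list; evaluation on $r_1,\dots,r_n,g_1,\dots,g_m$ (which generate $\A(U)$ over $k$) is injective, and its target $\A(U)_d^{\oplus n}\oplus\bigoplus_j\A(U)_{\deg g_j+d}$ is still a finitely generated module over the Noetherian ring $R$, because each $\A(U)_e=\O_X(eD)(U)T^e$ is a fractional ideal; coherence follows. The paper's own argument is shorter and avoids choosing generators: since $\pi$ is affine, $\pi_*\Theta_C$ is a coherent module over $\pi_*\O_C=\A$, and $\A$ is locally a finitely generated graded $\O_X$-algebra with $\A_0=\O_X$, so each graded piece of a coherent graded $\A$-module is coherent over $\O_X$. (Your side remark that coherence of the ungraded $\pi_*\Theta_C$ over $\O_X$ fails ``because $\A$ need not be generated in degree one'' is also off the mark: it fails for any $D$, integral or not, simply because $\pi_*\Theta_C$ has nonzero pieces in infinitely many degrees.)
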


\begin{proof}
Since $\Theta_C$ is a coherent $\O_C$-module and $\pi_*$ is exact ($\pi$ is an affine morphism), $\pi_*\Theta_C$ is a coherent $\pi_*\O_C$-module.
It follows that $(\pi_*\Theta_C)_d$ is coherent over $(\pi_*\O_C)_0=\O_X$.

Let $y\in X$ be an arbitrary (closed or non-closed) point. 
By \cite[2.5]{Dem88}, $\pi^{-1}(y)^{\red}\cong\Spec\kappa(y)[t,t^{-1}]$ is $1$-dimensional.
In particular, this implies that $\codim_C\pi^{-1}(Y) = \codim_XY$ for any closed subset $Y\subset X$.
Thus, reflexivity of $(\pi_*\Theta_C)_d$ follows from reflexivity of $\Theta_C$ (see \ref{28}) using Lemma~\ref{30}.
\end{proof}

\begin{lem}\label{39}
If $\dim X\ge1$, then $H^0(X,(\pi_*\Theta_C)_d)=(\Der_kA)_d$.
\end{lem}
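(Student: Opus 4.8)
The plan is to identify the degree-$d$ part of the global derivations of the cone with the homogeneous degree-$d$ derivations of $A$, passing through the open cone $\Gamma = \Spec A \setminus \{\mm\}$ via the identification $C \cong \Gamma$ from \eqref{27}. First I would unwind the definitions: by \eqref{72} we have $H^0(X,(\pi_*\Theta_C)_d) = (H^0(X,\pi_*\Theta_C))_d = (H^0(C,\Theta_C))_d = \Der_k(\O_C(C))_d$, where the middle equality uses that $\pi$ is affine so $H^0(X,\pi_*\Theta_C) = H^0(C,\Theta_C)$, and taking the degree-$d$ piece commutes with global sections because the grading comes from the $\GG_m$-action, i.e. from the decomposition $\pi_*\O_C = \bigoplus_i \O_X(iD)T^i$. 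So the content of the lemma reduces to comparing $\Der_k(\O_C(C))_d$ with $(\Der_k A)_d$.

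Next I would use $C \cong \Gamma = \Spec A \setminus \{\mm\}$ to rewrite $\O_C(C) = \O_\Gamma(\Gamma) = H^0(\Gamma, \O_\Gamma)$. Since $\Gamma$ is the punctured cone and $A$ is normal (hence $(S_2)$) with the vertex $\mm$ cut out by an ideal of height equal to $\dim A = \dim X + 1 \ge 2$ (this is exactly where the hypothesis $\dim X \ge 1$ enters), normality forces the restriction $A \to H^0(\Gamma,\O_\Gamma)$ to be an isomorphism: a normal ring equals the intersection of its localizations at height-one primes, so removing the single closed point of codimension $\ge 2$ does not change global sections. Thus $\O_C(C) \cong A$ as graded rings, compatibly with the $\GG_m$-action and the identification \eqref{51} of fraction fields $K(C) = Q(A)$.

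It then remains to see that a $k$-derivation of $A$ and a $k$-derivation of $\O_C(C) \cong A$ are the same data in each degree. Since both are the same ring, this is essentially a tautology once the identification of rings is in place, but I would spell out that a degree-$d$ homogeneous derivation of $A$ is by definition a $k$-linear map $A \to A$ raising degree by $d$ and satisfying the Leibniz rule, and that under the isomorphism $\O_C(C) \cong A$ the grading on $\Der_k(\O_C(C))$ induced by the $\GG_m$-action (equivalently by \eqref{72}) matches the internal grading on $\Der_k A$; homogeneity of a derivation corresponds to $\GG_m$-equivariance of the associated vector field on $C$. I expect the main obstacle to be the second step: justifying rigorously that global sections of $\O_\Gamma$ over the punctured cone recover all of $A$, i.e. that $H^0(\Gamma, \O_\Gamma) = A$. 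This is an application of normality together with the codimension bound, analogous to the ``normal sheaf'' property in Lemma~\ref{30}, and the condition $\dim X \ge 1$ is precisely what guarantees $\codim_{\Gamma^+}\{\mm\} \ge 2$ so that extending sections across the vertex is automatic.
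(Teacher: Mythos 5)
Your overall plan---transport the problem to the punctured cone $\Gamma=\Spec A\setminus\{\mm\}$ via \eqref{27} and exploit normality together with $\codim_{\Gamma^+}\{\mm\}=\dim X+1\ge2$---is sound, and your Hartogs argument for $H^0(\Gamma,\O_\Gamma)=A$ is correct. But there is a genuine gap at the very first step, which you present as ``unwinding the definitions'': the equality $H^0(C,\Theta_C)=\Der_k(\O_C(C))$ is not definitional, and it is false for schemes in general. A global section of the sheaf $\Theta_C=\SDer_k\O_C$ is a compatible family of derivations on open sets, and this carries more (or different) information than a derivation of the ring of global sections: for $Y=\PP^1_k$ one has $\dim_k H^0(Y,\Theta_Y)=3$ while $\Der_k(H^0(Y,\O_Y))=\Der_k(k)=0$. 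So the identification you assume is exactly where the content of the lemma sits. Your proposal in fact mislocates the difficulty: you flag $H^0(\Gamma,\O_\Gamma)=A$ as ``the main obstacle'' (that part is the routine $(S_2)$/Hartogs property of normal rings), while dismissing the comparison of sheaf derivations with ring derivations as ``essentially a tautology'' (that part is not, and you never argue it).

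The gap is repairable in two ways. (i) Your route can be completed using quasi-affineness of $\Gamma$: it is covered by the basic affine opens $D(f)\subset\Spec A$ with $f\in\mm$ homogeneous, on which $\Theta_\Gamma(D(f))=\Der_k(A_f)$; since a $k$-derivation of $A_f$ restricts to one of $A$ (once you know the induced map on global sections lands in $A=H^0(\Gamma,\O_\Gamma)$) and is uniquely determined by that restriction via the quotient rule, gluing over this cover identifies $H^0(\Gamma,\Theta_\Gamma)$ with $\Der_k(A)$. (ii) The paper sidesteps the issue by applying the Hartogs-type extension not to the structure sheaf but to the tangent sheaf itself: $\Theta_{\Gamma^+}$ is reflexive by \eqref{28}, hence normal in the sense of Lemma~\ref{30}, so restriction gives $\Der_kA=H^0(\Gamma^+,\Theta_{\Gamma^+})=H^0(\Gamma,\Theta_\Gamma)$---and here the first equality \emph{is} essentially definitional, because $\Gamma^+$ is affine. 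Either fix yields the lemma; as written, however, your argument assumes the key identification rather than proving it.
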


\begin{proof}\pushQED{\qed}
Since $A$ is a normal $k$-algebra, $\Gamma^+$ is normal affine $k$-variety (see \eqref{10}) and $\codim_{\Gamma^+}\{\mm\}=\dim(\Gamma^+)=\dim X+1\ge 2$ where $\mm$ is the vertex of $\Gamma^+$.
By \eqref{28} and Lemma~\ref{30}, $\Theta_{\Gamma^+}$ is normal and, using \eqref{27}, there is a graded identification
\[
\Der_kA=H^0(\Gamma^+,\Theta_{\Gamma^+})=H^0(\Gamma,\Theta_{\Gamma})=H^0(C,\Theta_C)=H^0(X,\pi_*\Theta_C).\qedhere
\]
\end{proof}

The following lemma serves to define the maps in the generalized Euler sequence \eqref{73} in Theorem~\ref{70}.

\begin{lem}\label{66}
There is a canonically split short exact sequence
\begin{equation}\label{53}
\xymat@R=1pt{
0\ar[r]&\K_C\ar[r]^-\Phi&\SDer_k\K_C\ar[r]^-\Psi&\ar[r]\pi^*\SDer_k\K_X\ar[r]&0\\
&1\ar@{|->}[r]&T\p_T
}
\end{equation}
obtained as the localization of an underlying graded sequence.
\end{lem}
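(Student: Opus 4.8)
The plan is to reduce \eqref{53} to a statement about the function field extension $K(X)\subset K(C)$ and to exploit the transcendence of $T$ recorded in \eqref{51}. First I would note that $\K_C$ and $\SDer_k\K_C$ are the constant sheaves on $C$ with values $K(C)$ and $\Der_k(K(C))$, while the generic stalk of $\pi^{*}\SDer_k\K_X$ is $\Der_k(K(X))\otimes_{K(X)}K(C)$, obtained by base change of $\SDer_k\K_X=\Der_k(K(X))$ along $K(X)=\O_{X,\eta_X}\to\O_{C,\eta_C}=K(C)$. Interpreting $\pi^{*}$ in the $\K$-linear sense (equivalently, evaluating at the generic point of $C$, where these rational sheaves are concentrated), it therefore suffices to establish a split short exact sequence of $K(C)$-vector spaces
\[
0\to K(C)\xrightarrow{\;\Phi\;}\Der_k(K(C))\xrightarrow{\;\Psi\;}\Der_k(K(X))\otimes_{K(X)}K(C)\to 0,
\]
where $\Phi(1)=T\p_T$ and $\Psi$ sends a derivation to its restriction to $K(X)$, and then to read the sheaf sequence \eqref{53} off this.

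Next I would construct the maps and check exactness directly from the fact that, by \eqref{51}, $K(C)=K(X)(T)$ with $T$ transcendental over $K(X)$. Transcendence gives a unique $K(X)$-linear derivation $\p_T$ of $K(C)$ with $\p_T(T)=1$, so $\Phi(1)=T\p_T$ is well defined and nonzero, hence $\Phi$ is injective. The composite $\Psi\circ\Phi$ vanishes since $T\p_T$ kills $K(X)$. Conversely, any $\delta\in\Der_k(K(C))$ is determined by $\delta|_{K(X)}$ together with $\delta(T)$, and one checks the decomposition $\delta=\wt{\delta|_{K(X)}}+\delta(T)\,\p_T$, where $\wt{\delta|_{K(X)}}$ is the lift of $\delta|_{K(X)}$ that annihilates $T$. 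This lift exists precisely because $T$ is transcendental: a $k$-derivation of $K(X)$ extends to $K(X)[T]$, and then to $K(C)$, by prescribing its value on $T$ freely. The decomposition shows simultaneously that $\ker\Psi=K(C)\,\p_T=K(C)\,T\p_T=\operatorname{im}\Phi$ (middle exactness), that $\Psi$ is surjective, and that $\delta_0\mapsto\wt{\delta_0}$ is a $K(C)$-linear splitting of $\Psi$; so the sequence is split exact.

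The step I expect to be the crux is the surjectivity of $\Psi$ together with the splitting, i.e.\ the existence of the lift $\wt{\delta|_{K(X)}}$. Conceptually this is the left-exactness of the cotangent sequence
\[
0\to\Omega^1_{K(X)/k}\otimes_{K(X)}K(C)\to\Omega^1_{K(C)/k}\to\Omega^1_{K(C)/K(X)}\to 0,
\]
which holds because $K(C)/K(X)$ is separable, being purely transcendental by \eqref{51}; dualizing this split exact sequence of finite-dimensional $K(C)$-vector spaces returns the sequence above. It is worth stressing that only the separability of $K(C)/K(X)$ is used, not that of $K(X)/k$, so the argument is valid over an arbitrary, possibly imperfect, field $k$. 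The remaining work is the bookkeeping that identifies the constant and rational sheaves $\K_C$, $\SDer_k\K_C$ and $\pi^{*}\SDer_k\K_X$ with the three $K(C)$-vector spaces above, after which \eqref{53} and its splitting follow.
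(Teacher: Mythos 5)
Your proposal is correct and follows essentially the same route as the paper: both reduce \eqref{53} to the function-field level, use that $K(C)=K(X)(T)$ is purely transcendental (hence separable) over $K(X)$ so that the dualized cotangent sequence of $K(C)$-vector spaces is exact, identify the kernel as $K(C)T\p_T$, and obtain the splitting by extending a derivation of $K(X)$ to $K(C)$ with prescribed value $0$ on $T$. Your hands-on verification of exactness via the decomposition $\delta=\wt{\delta|_{K(X)}}+\delta(T)\p_T$ merely makes explicit what the paper gets by citing Matsumura's Thm.~26.6, a route you also acknowledge as the conceptual underpinning.
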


\begin{proof}
Since $T$ in \eqref{51} is transcendental over $K(X)$, $K(X)\subset K(C)$ a separable field extension.
This yields a short exact sequence of $K(C)$-vector spaces (see \cite[Thm.~26.6]{Mat89})
\[
0\gets\Omega_{K(C)/K(X)}\gets\Omega_{K(C)/k}\gets K(C)\otimes_{K(X)}\Omega_{K(X)/k}\gets0.
\]
Its $K(C)$-dual reads
\begin{equation}\label{104}
0\to\Der_{K(X)}(K(C))\to\Der_k(K(C))\to K(C)\otimes_{K(X)}\Der_k(K(X))\to 0.
\end{equation}

Since $\Omega^1_{K(X)[T]/K(X)}=K(X)[T]dT$ and K\"ahler differentials commute with localization,
\begin{align*}
\Der_{K(X)}(K(C))&=\Hom_{K(C)}(\Omega^1_{K(C)/K(X)},K(C))\\
&=\Hom_{K(C)}(K(C)\otimes_{K(X)[T^{\pm1}]}\Omega^1_{K(X)[T^{\pm1}]/K(X)},K(C))\\
&=K(C)\otimes_{K(X)[T^{\pm1}]}\Hom_{K(C)}(\Omega^1_{K(X)[T^{\pm1}]/K(X)},K(X)[T^{\pm1}])\\
&=K(C)\otimes_{K(X)[T^{\pm1}]}\Der_{K(X)}(K(X)[T^{\pm1}])
\end{align*}
is a localization of the graded module $\Der_{K(X)}(K(X)[T^{\pm1}])$ which is free with with homogeneous basis $T\partial_T$ of degree $0$.
Similarly the entire sequence~\eqref{104} is obtained by localizing the dual of the graded short exact sequence
\begin{equation}\label{113}
0\gets\Omega_{K(X)[T]/K(X)}\gets\Omega_{K(X)[T]/k}\gets K(X)[T]\otimes_{K(X)}\Omega_{K(X)/k}.
\end{equation}
Viewing all terms of the sequence~\eqref{104} as constant sheaves yields the sequence~\eqref{53}.

The canonical splitting of the sequence~\eqref{104} is induced the composition of the inclusions
\[
\Der_k(K(X))\into\Der_k(K(X)[T])\into\Der_k(K(C)),
\]
the first one defined by setting $\sigma(T)=0$ for any $\sigma\in\Der_k(K(X))$, and the second one by extension to the fraction field.
\end{proof}

Now we are ready to explain the idea behind Theorem~\ref{70}:
By Lemma~\ref{39} and \eqref{72}, any $\delta\in(\Der_k A)_d$ can be considered as a rational derivation $\delta\in H^0(X,\SDer_k\pi_*\K_C)=K(C)$ with the additional requirement that
\begin{equation}\label{74}
\delta(\O_X(iD)T^i)\subset\O_X((i+d)D)T^{i+d}.
\end{equation}
More explicitly, for a dense open subset $U$ and its (dense open) preimage $V:=\pi^{-1}(U)$, the restriction of $\delta$ to $U$ in $H^0(U,\pi_*(\Theta_C)_d)\subset H^0(V,\Theta_C)$ can be viewed as a rational derivation $\delta\in\Der_k(K(C))$.
By Lemma~\ref{66}, $\delta$ decomposes uniquely as
\begin{equation}\label{52}
\delta=\sigma+\alpha T\partial_T,
\end{equation}
where $\alpha \in K(C)$ and $\sigma\in K(C)\otimes_{K(X)}\Der_k(K(X))$.
The condition~\eqref{74} imposes restrictions on the possible choices of $\sigma$ and $\alpha$ in \eqref{52}. 
We will investigate these options via a local computation in \S\ref{71}. 
The first two arrows in our generalized Euler sequence in Theorem~\ref{70} will be induced by the simple minded inclusion $\alpha\mapsto\alpha T^dT\partial_T=\pi_*\Phi(\alpha T^d)$ and the projection $\delta\mapsto T^{-d}\sigma=\pi_*\Psi(T^{-d}\delta)$ induced by $\pi_*$ of \eqref{53} (see \eqref{75} and \eqref{76}).
The possible $\sigma$ in \eqref{52} will turn out to form a sheaf of certain logarithmic derivations.
It is defined in terms of rational derivations in the spirit of Zariski (see \cite[Ch.\,I]{Kni73}).

\begin{dfn}\label{29}
Let $S$ be a $\QQ$-divisor and $L$ a reduced effective Weil divisor.
Define the sheaf of derivations with poles along $S$ logarithmic along $L$ as the sheaf sections of the constant sheaf $\SDer_k\K_X$ of rational $k$-linear derivations on $X$ which map $\O_X(-L)$ to $\O_X(S-L)$ (see \eqref{90}),
\[
\SDer_X(S-\log L):=\O_X(S-L):_{\SDer_k\K_X}\O_X(-L).
\]
\end{dfn}

\begin{rmk}\label{101}
For $S=0$ in Definition~\ref{29}, the sheaf $\SDer_X(S-\log L)=\SDer_X(-\log L)$ is the one defined by Saito~\cite{Sai80}, as suggested by our notation.
However for a (possibly non-reduced) effective divisor $S$ it differs from the multilogarithmic derivations associated with hyperplane arrangements as defined by Ziegler (see \cite{Zie89}).
\end{rmk}

\begin{lem}\label{34}
\begin{asparaenum}
\item\label{34a} The sheaf $\SDer_X(S-\log L)$ is a coherent reflexive $\O_X$-module.
\item\label{34b} The canonical map 
\[
\SDer_X(-\log L)\otimes_{\O_X}\O_X(S)\to\SDer_X(S-\log L)
\]
is an isomorphism if $\floor{S}$ is a Cartier divisor.
\item\label{34c} The map in \eqref{34b} induces an isomorphism
\[
(\SDer_X(-\log L)\otimes_{\O_X}\O_X(S))^{\vee\vee}\cong\SDer_X(S-\log L).
\]
\end{asparaenum}
\end{lem}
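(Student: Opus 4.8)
The plan is to prove the three statements about $\SDer_X(S-\log L)$ by reducing everything to local questions on the normal variety $X$ and exploiting the colon-ideal definition together with the reflexivity criterion of Lemma~\ref{30}. Recall that by Definition~\ref{29} we have the concrete description
\[
\SDer_X(S-\log L)=\O_X(S-L):_{\SDer_k\K_X}\O_X(-L),
\]
so that on any open $U$ a rational derivation $\delta\in\Der_k(K(X))$ lies in the sections over $U$ precisely when $\delta(\O_X(-L)(U))\subset\O_X(S-L)(U)$.

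For part \eqref{34a}, I would first observe that $\SDer_X(S-\log L)$ is by construction a subsheaf of the torsion-free constant sheaf $\SDer_k\K_X$, hence is itself torsion-free; coherence follows because it is a colon of coherent reflexive sheaves inside $\SDer_k\K_X$, which one checks is coherent locally. The substance is \emph{normality} in the sense preceding Lemma~\ref{30}: for $U$ open and $Y\subset U$ closed of codimension $\ge2$, the restriction $\SDer_X(S-\log L)(U)\to\SDer_X(S-\log L)(U\setminus Y)$ is bijective. Here I would use that both $\O_X(-L)$ and $\O_X(S-L)$ are reflexive, hence normal, so a rational derivation satisfying the colon condition away from $Y$ automatically satisfies it on all of $U$ once we know its values land in the normal sheaf $\O_X(S-L)$; the key point is that the defining inclusion is tested on generators of $\O_X(-L)$, whose values extend across the codimension-$\ge2$ set $Y$ by normality of $\O_X(S-L)$. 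Reflexivity then follows from Lemma~\ref{30}.

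For part \eqref{34b}, the hypothesis that $\floor{S}$ is Cartier means $\O_X(S)=\O_X(\floor{S})$ is locally free of rank one, so locally we may choose a generator $g$ with $\O_X(S)=g^{-1}\O_X$ (as a subsheaf of $\K_X$). I would then exhibit the canonical map as multiplication by $g^{-1}$ and check that it sends $\SDer_X(-\log L)\otimes\O_X(S)$ isomorphically onto the colon sheaf: since $\O_X(S-L)=g^{-1}\O_X(-L)$ locally when $\floor{S}$ is Cartier, the condition $\delta(\O_X(-L))\subset\O_X(-L)$ tensored up by $g^{-1}$ matches exactly the condition $\delta(\O_X(-L))\subset\O_X(S-L)$. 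This is essentially a local computation once the Cartier generator is fixed, and the invertibility of $g$ makes the correspondence bijective on each affine open.

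For part \eqref{34c}, I would combine \eqref{34a} and \eqref{34b}. The canonical map in \eqref{34b} is an isomorphism on the open set $X\setminus\Sing(\floor{S})$ where $\floor{S}$ is Cartier (which is the complement of a closed set of codimension $\ge2$, since $\O_X(S)$ is reflexive and $X$ is normal). By \eqref{34a} the target $\SDer_X(S-\log L)$ is reflexive, and double-dualization is insensitive to modifications in codimension $\ge2$; since two reflexive sheaves agreeing on such an open subset must be isomorphic via the reflexive hull, taking $(-)^{\vee\vee}$ of the map in \eqref{34b} produces the asserted isomorphism. Concretely, the reflexive hull of the source is computed by restricting to the locus where $\floor{S}$ is Cartier and pushing forward, which by \eqref{34b} is exactly $\SDer_X(S-\log L)$.

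I expect the main obstacle to be the normality verification in part \eqref{34a}: one must argue carefully that the colon condition, which is an inclusion of sheaves rather than a single global section, is preserved under the codimension-$\ge2$ extension. The cleanest route is probably to test the inclusion $\delta(\O_X(-L))\subset\O_X(S-L)$ on a fixed finite set of local generators of $\O_X(-L)$ and invoke the normality of the reflexive sheaf $\O_X(S-L)$ for each resulting section, but one should check that this reduction is uniform across the open $U$.
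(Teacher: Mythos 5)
Your treatment of parts \eqref{34b} and \eqref{34c}, and the normality half of \eqref{34a}, is correct and follows the same route as the paper: multiplication by a local equation of $\floor{S}$ (note $\floor{S-L}=\floor{S}-L$ since $L$ is a reduced integer divisor) gives \eqref{34b}, exactly as the paper does via \eqref{82} with $\divisor(f)=\floor{S}$; and \eqref{34c} follows because $\floor{S}$ is Cartier in codimension $1$ on the normal variety $X$ (the reason is regularity in codimension $1$, not reflexivity of $\O_X(S)$ as you say, but the claim is right), so the map in \eqref{34b} is an isomorphism off a closed set of codimension $\ge2$ and one concludes with \eqref{34a} and Lemma~\ref{30}. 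Your normality argument is also sound, and simpler than you fear: no generators or uniformity are needed, since for any local section $h$ of $\O_X(-L)$ the rational function $\delta(h)$ lies in $\O_X(S-L)$ off the codimension-$\ge2$ set $Y$, hence everywhere, by normality of that reflexive sheaf.

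The genuine gap is coherence in \eqref{34a}, which you dismiss with ``it is a colon of coherent reflexive sheaves inside $\SDer_k\K_X$, which one checks is coherent locally.'' That sentence restates the claim rather than proving it: the colon is formed inside the constant sheaf $\SDer_k\K_X$, which is a sheaf of $K(X)$-vector spaces and in particular not coherent, so there is no general principle making such a colon coherent---this is precisely where the paper's proof does its real work. The paper (on affine $X=\Spec B$) chooses a nonzero divisor $g\in H^0(X,\O_X(-L))$ and $f\in B$ so that, after the harmless rescaling \eqref{82}, one has honest ideal sheaves $\I:=\O_X(S-\divisor(f)-L)\subset\O_X\cdot g\subset\O_X(-L)=:\J$; the Leibniz rule is then used twice: first, any rational $\delta$ with $\delta(\J)\subset\I$ preserves $\O_X\cdot g$ and hence $\O_X$, so the colon actually lands inside the coherent sheaf $\SDer_k\O_X$; second, writing $\J=\ideal{f_1,\dots,f_m}_{\O_X}$, the colon equals $\SDer_k(\O_X,\I:_{\O_X}\J)\cap\bigcap_{i=1}^m\I:_{\SDer_k\O_X}f_i$, a finite intersection of visibly coherent subsheaves. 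Some argument of this type---bounding the poles of $\delta$ on $\O_X$ itself, then cutting out the colon by finitely many coherent conditions---is indispensable, and it is missing from your proposal. Note also that you misplace the difficulty when you call normality ``the main obstacle'': in this lemma normality is the easy half and coherence is the hard one.
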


\begin{proof}
The statements are of local nature, so we may assume that $X=\Spec B$ is affine.
Then $H^0(X,\O_X(S-L))$ is a fractional ideal.
It follows that, for any non-zero divisor $g\in H^0(X,\O_X(-L))$, there is an $f\in B$ such that
\begin{equation}\label{103}
\I:=\O_X(S-\divisor(f)-L)\subset\O_X(-\divisor(g))=\O_X\cdot g\subset\O_X(-L)=:\J
\end{equation}
is a chain of inclusions of (coherent) ideal sheaves.
As derivations extend uniquely to fields of fractions, $\SDer_k(\K_X)=\SDer_k(\O_X,\K_X)$.
Since multiplication by $f$ defines an automorphism of the latter $\O_X$-module, we obtain
\begin{equation}\label{82}
f\cdot\SDer_X(S-\log L)\cong\SDer_X(S-\divisor(f)-\log L).
\end{equation}
After suitably modifying $S$, we may therefore assume that $f=0$.
By the Leibniz rule, $\SDer_k(\O_X\cdot g)\subset\SDer_k(\O_X)$ and \eqref{103} implies that
\[
\SDer_X(S-\log L)\cong\I:_{\SDer_k\O_X}\J.
\]
Writing $\J=\ideal{f_1,\dots,f_m}_{\O_X}$, another application of the Leibniz rule shows that
\[
\I:_{\SDer_k\O_X}\J=\SDer_k(\O_X,\I:_{\O_X}\J)\cap\bigcap_{i=1}^m\I:_{\SDer_k\O_X}f_i.
\]
Thus, coherence of $\SDer_X(S-\log L)$ follows from coherence of $\SDer_k(\O_X,\M)=\SHom_{\O_X}(\Omega^1_X,\M)$ for any coherent $\O_X$-module $\M$.
Since $\SDer_X(S-\log L)$ is defined by codimension-$1$ conditions, it is a normal sheaf and hence reflexive by Lemma~\ref{30}.
This proves \eqref{34a}, and \eqref{34b} follows from \eqref{82} taking $\divisor(f):=\floor{S}$.
To see \eqref{34c}, use that $\floor{S}$ is Cartier in codimension $1$ and apply \eqref{34a} and Lemma~\ref{30}.
\end{proof}

\section{Local derivations on a cylinder}\label{71}

In this section, we investigate the condition \eqref{74}.
Let $D$ be a $\QQ$-divisor on $X$ and write it as in \eqref{88}. 
Let $U=\Spec B$ be an open affine subset of $X$ on which all the $V$ appearing \eqref{88} are Cartier. 
This condition is met, for example, when $U$ is regular.  

We first treat the case where $D$ has only one irreducible component. 
Equivalently, $D = \frac{p}{q}V$. 
Let $g$ be a defining equation for $V$ on $U$. 
The restriction of $\A$ to $U$ is the $B$-algebra
\[
A=\bigoplus_{i\in\ZZ}Bg^{-\floor{pi/q}}T^i\subset B_g[T,T^{-1}].
\]
We aim for an explicit description of the graded algebra $\Der_kA$. 
Note that $A_g=B_g[T,T^{-1}]$ and hence 
\begin{equation}\label{21}
\Der_kA\subset\Der_kA_g=A_g\Der_kB_g\oplus A_gT\p_T.
\end{equation}
Accordingly any derivation $\delta\in\Der_kA$ decomposes uniquely as 
\[
\delta=\sigma+\alpha T\p_T,\quad\sigma\in A_g\Der_kB_g,\quad\alpha\in A_g.
\]
For a homogeneous $\delta\in(\Der_kA)_d$, by degree reasons, 
\[
\sigma':=T^{-d}\sigma\in\Der_kB_g,\quad \alpha':=T^{-d}\alpha\in B_g.
\]
Correspondingly we consider
\begin{equation}\label{15}
\delta':=T^{-d}\delta=\sigma'+\alpha'T\p_T\in\Der_kA_g.
\end{equation}
For notational convenience, we abbreviate
\begin{equation}\label{79}
s_i:=m_{i+d}-m_i,\quad m_i:=\floor{pi/q}.
\end{equation}
Then
\[
\delta'=\sigma'+\alpha'T\p_T:Bg^{-m_i}T^i\to Bg^{-m_{i+d}}T^i
\]
and hence, cancelling $T^i$,
\[
\sigma'+i\alpha':Bg^{-m_i}\to Bg^{-m_{i+d}}.
\]
For any $b\in B$, this means that
\begin{equation}\label{14}
\sigma'(b)+b(i\alpha'-m_i\sigma'(g)/g)\in g^{-s_i}B.
\end{equation}
The minimal $s_i$ will be relevant for further investigations and we denote it by
\begin{equation}\label{80}
s:=\min\{s_i\mid i\in\ZZ\},
\end{equation}
Setting $b=1$, we see that \eqref{14} is equivalent to the following two conditions
\begin{align}
&\sigma':B\to g^{-s}B,\label{45}\\
&i\alpha'-m_i\sigma'(g)/g\in g^{-s_i}B \quad \text{for all $i\in\ZZ$}.\label{46}
\end{align}

Condition \eqref{45} implies that $\ord_g(g^s\sigma'(g)/g) \geq -1$. 
Two cases are possible:

\begin{enumerate}[(a)]
\item $\ord_g(g^s\sigma'(g)/g) \geq 0$ \label{54}
\item $\ord_g(g^s\sigma'(g)/g) = -1$ \label{55}
\end{enumerate}

We shall find out a numerical criterion, in terms of $p$, $q$ and 
\[
w:=\ch(k), 
\]
to tell when \eqref{55} and \eqref{46} can simultaneously hold. 
Our guiding principle is the following: 
If \eqref{55} holds then for $s_i=s$ the excessive pole of $-m_ig^s\sigma'(g)/g$ in \eqref{46} must be canceled by $ig^s\alpha'$.
Indeed, notice that \eqref{46} requires in particular that
\begin{equation}\label{56}
\ord_g(ig^s\alpha'-m_ig^s\sigma'(g)/g)\geq0\quad\text{if }s_i=s.
\end{equation}

We first need to see for what $i$ can $s_i$ achieve the minimum $s$. 
We use the notation $\{x\}:=x-\floor{x}$ in the following paragraphs.
Then
\begin{align*}
s_i&=\floor{p(i+d)/q}-\floor{pi/q}\\
&=\floor{pd/q}+\floor{pi/q} + \floor{\{pd/q\} + \{pi/q\}} -\floor{pi/q} \\
&= \floor{pd/q} + \floor{\{pd/q\} + \{pi/q\}}.
\end{align*}

Thus, we obtain the part~\eqref{36a} of the following lemma.
For part~\eqref{36b}, pick $i=q$ if $w\nmid q$ and, otherwise, pick $i\in\ZZ$ with $pi\equiv 1\mod q$.

\begin{lem}\label{36}
We have $s=\floor{pd/q}$.
Moreover, $s_i = s$ if and only if $\{pd/q\} + \{pi/q\} < 1$.
In particular,
\begin{enumerate}
\item\label{36a} if $pd\equiv -1\mod q$ then $s_i=s$ if and only if $i\in q\ZZ$;
\item\label{36b} if $pd\not\equiv -1\mod q$ or $w\nmid q$ then $s_i=s$ and $w\nmid i$ for some $i\in\ZZ$.\qed
\end{enumerate}
\end{lem}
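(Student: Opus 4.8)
The plan is to reduce everything to the closed formula for $s_i$ computed just above the statement. Writing $pi/q=\floor{pi/q}+\{pi/q\}$ and $pd/q=\floor{pd/q}+\{pd/q\}$ and using $\floor{n+x}=n+\floor{x}$ for $n\in\ZZ$, one obtains
\[
s_i=\floor{p(i+d)/q}-\floor{pi/q}=\floor{pd/q}+\floor{\{pd/q\}+\{pi/q\}}.
\]
Since $\{pd/q\},\{pi/q\}\in[0,1)$, their sum lies in $[0,2)$, so the final floor is $0$ or $1$; hence $s_i\in\{\floor{pd/q},\floor{pd/q}+1\}$. The lower value is attained at $i=0$, where $\{pi/q\}=0$ forces $s_0=\floor{pd/q}$, so $s=\floor{pd/q}$ and $s_i=s$ exactly when $\{pd/q\}+\{pi/q\}<1$. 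This establishes the first two assertions.

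For \eqref{36a} I would observe that $pd\equiv-1\mod q$ means $\{pd/q\}=(q-1)/q$, so the inequality $\{pd/q\}+\{pi/q\}<1$ becomes $\{pi/q\}<1/q$. As $\{pi/q\}=(pi\bmod q)/q$ is an integer multiple of $1/q$, this forces $\{pi/q\}=0$, i.e.\ $q\mid pi$, i.e.\ $q\mid i$ since $p$ and $q$ are coprime. Thus $s_i=s$ if and only if $i\in q\ZZ$.

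For \eqref{36b} I would split along the indicated choice of $i$. If $w\nmid q$, take $i=q$: then $\{pi/q\}=0$, so $s_i=s$, and $w\nmid q=i$ by hypothesis (in characteristic $0$ this reads $q\neq0$, which holds). Otherwise $w\mid q$, so $w$ is a prime and $pd\not\equiv-1\mod q$; by coprimality pick $i$ with $pi\equiv1\mod q$, so $\{pi/q\}=1/q$. Since $\{pd/q\}$ is a multiple of $1/q$ different from $(q-1)/q$, we have $\{pd/q\}\le(q-2)/q$, whence $\{pd/q\}+\{pi/q\}\le(q-1)/q<1$ and $s_i=s$. Finally, reducing $pi\equiv1\mod q$ modulo $w$ (legitimate as $w\mid q$) gives $pi\equiv1\mod w$, so $w\mid i$ would force $w\mid1$, impossible for a prime; hence $w\nmid i$.

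The only delicate point is \eqref{36b}, where a single $i$ must simultaneously minimize $s_i$ and avoid divisibility by the characteristic, and the argument genuinely branches on whether $w\mid q$. Keeping the degenerate case $w=\ch k=0$ consistent—where $w\nmid n$ is read as $n\neq0$—is the main thing to watch, but it falls into the case $w\nmid q$ and causes no trouble.
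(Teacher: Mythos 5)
Your proof is correct and takes essentially the same approach as the paper: the identical identity $s_i=\floor{pd/q}+\floor{\{pd/q\}+\{pi/q\}}$ yields the first two claims, and for part~(2) you use exactly the witnesses the paper prescribes, namely $i=q$ when $w\nmid q$ and $i$ with $pi\equiv1\mod q$ when $w\mid q$. The paper states these choices without verification; your write-up supplies the checks correctly, including the characteristic-zero convention that $w\nmid n$ means $n\neq0$.
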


We can now characterize the case where \eqref{55} and \eqref{46} both hold explicitly as follows.

\begin{lem}\label{32}
Assume that both \eqref{55} and \eqref{46} hold.
Then $pd \equiv -1 \mod q$ and $w\nmid q$. 
\end{lem}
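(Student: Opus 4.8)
The plan is to argue by contradiction, reducing everything to a residue computation in the discrete valuation ring obtained by localizing $B$ at the generic point of $V$. Write $\beta:=g^{s}\sigma'(g)/g$, so that \eqref{55} reads $\ord_g\beta=-1$, and recall from \eqref{56} that for every $i$ with $s_i=s$ one has $\ord_g(ig^{s}\alpha'-m_i\beta)\ge0$. Let $R=\O_{X,V}$ be the local ring at the generic point of $V$, a DVR with uniformizer $g$ and residue field $\kappa$; for $x\in g^{-1}R$ call the image of $gx$ in $\kappa$ the \emph{residue} of $x$ (the coefficient of its simple pole). Both assertions $pd\equiv-1\bmod q$ and $w\nmid q$ will be extracted by testing \eqref{56} at a few explicit indices, crucially $i=q$ (which always satisfies $s_q=s$, with $m_q=p$, since $\{pq/q\}=0$) and an index $i_0$ with $pi_0\equiv1\bmod q$.

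First I would prove $w\nmid q$. Applying \eqref{56} at $i=q$ gives $\ord_g(qg^{s}\alpha'-p\beta)\ge0$. If $w\mid q$, then $q=0$ in $k$, so $qg^{s}\alpha'=0$ and the inequality becomes $\ord_g(p\beta)\ge0$; but $\gcd(p,q)=1$ forces $w\nmid p$, so $p$ is a unit and $\ord_g(p\beta)=\ord_g\beta=-1$, a contradiction. Hence $w\nmid q$, so $q$ is invertible in $\kappa$. Using this, the inequality at $i=q$ also yields $\ord_g(g^{s}\alpha')\ge-1$, so $g^{s}\alpha'\in g^{-1}R$ and its residue $\tau\in\kappa$ is defined; write $\rho\in\kappa$ for the residue of $\beta$, which is nonzero by \eqref{55}. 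Multiplying the relation in \eqref{56} by $g$ and reducing modulo $g$ then gives the master identity $i\tau=m_i\rho$ in $\kappa$ for every $i$ with $s_i=s$.

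It remains to prove $pd\equiv-1\bmod q$. Suppose not, so that $a:=pd\bmod q\le q-2$ and hence $q-a\ge2$. By Lemma~\ref{36} the condition $s_i=s$ holds whenever $pi\bmod q<q-a$, so there is $i_0\in\{1,\dots,q-1\}$ with $pi_0\equiv1\bmod q$ and $s_{i_0}=s$; for this index $qm_{i_0}=pi_0-1$. The master identity at $i=q$ and at $i=i_0$ reads $q\tau=p\rho$ and $i_0\tau=m_{i_0}\rho$. Eliminating $\tau$, $(pi_0-1)\rho=qm_{i_0}\rho=q(i_0\tau)=i_0(q\tau)=pi_0\rho$, whence $\rho=0$, contradicting $\rho\ne0$. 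Therefore $pd\equiv-1\bmod q$. The delicate points I expect are getting the pole order of $\alpha'$ under control (so that the residue $\tau$ exists and \eqref{56} becomes a genuine identity in $\kappa$) and the bookkeeping with $\ch k=w$ against $\gcd(p,q)=1$; once these are in place, the contradiction is the short elimination between the two test indices $i=q$ and $i=i_0$.
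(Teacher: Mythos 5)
Your proof is correct and takes essentially the same route as the paper's: you establish $w\nmid q$ by testing \eqref{56} at $i=q$ exactly as the paper does, and you obtain $pd\equiv-1\bmod q$ by playing that test off against an index $i_0$ with $pi_0\equiv1\bmod q$, your residues $\tau,\rho$ in the DVR $\O_{X,V}$ being precisely the paper's comparison of leading pole coefficients along $V$. The only refinement is that your elimination of $\tau$ proceeds by multiplication rather than division, so you never need the paper's intermediate step of passing to the family $i_0+q\ZZ$ and choosing $j$ with $w\nmid(i_0+qj)$ before dividing by $i_0+qj$.
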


\begin{proof}
We treat both the case $w\ne0$ and $w=0$ simultaneously.
Note that, in the latter case, $w\nmid i$ is equivalent to $i\ne0$ for $i\in\ZZ$.

Let $i=q$, then $m_i=p$, and $s_i=s$ by Lemma~\ref{36}. 
Then condition \eqref{56} reads
\begin{equation}\label{26}
\ord_g(g^s(q\alpha'-p\sigma'(g)/g)) \geq 0
\end{equation}

This inequality tells us that $w \nmid q$. 
In fact, if $w \mid q$, we would have $\ord_g(-pg^s\sigma'(g)/g) \geq 0$. 
By \eqref{55}, this would force $w \mid p$. 
However, $p$ and $q$ should have no common factor according to our initial choice.

Suppose $\{pd/q\}\neq(q-1)/q$. 
Take an integer $i$ such that $pi \equiv 1 \mod q$.
Such an integer exists because $p$ and $q$ are coprime. 
According to Lemma~\ref{36}, $s_{i+qj} = s$ for all $j\in\ZZ$.
Condition \eqref{56} with $i$ replaced by $i+qj$ reads
\begin{equation*}
\ord_g(g^s((i+qj)\alpha'-m_{i+qj}\sigma'(g)/g)) \geq 0.
\end{equation*} 
Comparing with \eqref{26} implies that
\begin{equation}\label{16}
\frac{m_{i+qj}}{i+qj} = \frac{p}{q}\quad\text{ in }k\text{ for all }j\in\ZZ\text{ such that }w \nmid (i+qj).
\end{equation}

Since $w \nmid q$, there exists a $j$ such that $w \nmid (i+qj)$. 
Thus the content of condition~\eqref{16} is non vacant. 

Using the definition $m_{i+qj}=\floor{p(i+qj)/q}$ and simplifying the above equation, we get
\begin{equation*}
pi/q-\floor{pi/q} = 0\quad\text{in $F_w\subset k$}.
\end{equation*}

This equation tells us that $pi-q\floor{pi/q}$ is a multiple of $w$ in $\ZZ$. 
However $pi-q\floor{\frac{p}{q}i} = 1$ by our choice of $i$. 
This is a contradiction. 
\end{proof}

There is also an inverse construction. 
Suppose $pd \equiv -1 \mod q$ and $w \nmid q$. 
Given a $\sigma' \in \Der_kB_g$ with the additional properties \eqref{45} and \eqref{55}, we can choose any
\begin{equation}\label{48}
\alpha':=\frac{p}{q}\frac{\sigma'(g)}{g}+g^{-s}a,\quad a\in B,
\end{equation}
and lift $\sigma'$ to $\delta' = \sigma' + \alpha'T\partial_{T}$. 
The condition \eqref{46} is satisfied because when $i \in q\ZZ$ the left hand side of \eqref{46} is $g^{-s} a$ by our choice of $\alpha'$.
If on the other hand $i \notin q\ZZ$ we have $s_i > s$ according to Lemma~\ref{36}; consequently, using \eqref{55},
\begin{equation*}
\ord_g(g^{s_i}(i\alpha'-m_i\sigma'(g)/g)) \geq \min\{\ord_g(g^{s_i}\sigma'(g)/g), \ord_g(g^{s_i}\alpha')\} \geq 0.
\end{equation*} 

It is time to explain the geometric meaning of this construction. 
Condition \eqref{45} tells us that restricting $\pi_*\Psi$ from \eqref{53} to $(\pi_*\Theta_C)_d\vert_U$ induces a morphism 
\begin{equation}\label{75}
\psi_{U}:(\pi_*\Theta_C)_d\vert_U\to\Theta_U(sV),\quad\delta\mapsto\sigma'=\pi_*\Psi(T^{-d}\delta).
\end{equation}
Lemma~\ref{32} combined with the above inverse construction shows that $\psi_U$ is surjective if and only if $pd \equiv -1 \mod q$ and $w \nmid q$.
If this numerical condition is not satisfied, then condition~\eqref{54} must hold and we see that the image of $\psi_{U}$ is contained in $\SDer_U(-\log V)(sV)$. 
The morphism is in fact surjective since for a given $\sigma'$ satisfying \eqref{54} we can lift it to a section of $(\pi_*\Theta_C)_d$ on $U$ by choosing any
\begin{equation}\label{49}
\alpha'\in g^{-s}B
\end{equation}
and $\delta'$ according to \eqref{15}. 
This lift apparently satisfies condition~\eqref{46}. 

Summarizing the above, we have

\begin{prp}\label{22}
Let $s=\floor{pd/q}$ and $w=\ch(k)$.
Then there is a short exact sequence based on the following numerical conditions.
If $pd\equiv-1\mod q$ and $w\nmid q$, then
\begin{equation}\label{86}
\xymat{
0\ar[r] & \O_U(sV)\ar[r]^-{\varphi_U} & (\pi_*\Theta_C)_d\vert_U\ar[r]^-{\psi_U} & \Theta_U(sV) \ar[r] & 0.
}
\end{equation}
If $pd\equiv-1\mod q$ and $w\mid q$, then
\[
\xymat{
0\ar[r] & \O_U(sV+V)\ar[r]^-{\varphi_U} & (\pi_*\Theta_C)_d\vert_U\ar[r]^-{\psi_U} & \SDer_U(-\log V)(sV) \ar[r] & 0.
}
\]
Otherwise,
\[
\xymat{
0\ar[r] & \O_U(sV)\ar[r]^-{\varphi_U} & (\pi_*\Theta_C)_d\vert_U\ar[r]^-{\psi_U} & \SDer_U(-\log V)(sV) \ar[r] & 0.
}
\]
\end{prp}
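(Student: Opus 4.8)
The plan is to read off the three sequences from the local computation already carried out, treating the inclusion $\varphi_U$ and the projection $\psi_U$ separately and extracting kernel, image and exactness from the conditions \eqref{45}, \eqref{46}, \eqref{54} and \eqref{55}. First I would record the sheaf dictionary on $U$, where $V$ is Cartier with local equation $g$. Since $\floor{sV}=sV$ is Cartier, Lemma~\ref{34}\eqref{34b} gives $\SDer_U(-\log V)(sV)\cong\SDer_U(sV-\log V)$, whose sections are exactly the rational derivations $\sigma'$ satisfying both \eqref{45} and \eqref{54}, that is $\sigma'(B)\subseteq g^{-s}B$ and $\ord_g(\sigma'(g))\ge 1-s$; dropping the second requirement identifies $\Theta_U(sV)$ with the $\sigma'$ obeying \eqref{45} alone. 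The map $\psi_U$ of \eqref{75} sends $\delta\mapsto\sigma'$, and since the decomposition of any section $\delta$ of $(\pi_*\Theta_C)_d$ over $U$ forces its $\sigma'$ to satisfy \eqref{45}, the image of $\psi_U$ always lies in $\Theta_U(sV)$.

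Next I would pin down the image of $\psi_U$. In the case $pd\equiv-1\mod q$ and $w\nmid q$, the lifts \eqref{48} and \eqref{49} realise every $\sigma'$ in case \eqref{55} and every $\sigma'$ in case \eqref{54} respectively, so $\psi_U$ surjects onto all of $\Theta_U(sV)$. In the two remaining cases Lemma~\ref{32} forbids a section of the image from satisfying \eqref{55}, so the image is contained in the subsheaf cut out by \eqref{54}, namely $\SDer_U(-\log V)(sV)$, and surjectivity onto it follows once more from the lift \eqref{49}. This is precisely where the trichotomy of targets enters.

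The one genuinely new computation is the kernel of $\psi_U$, equivalently the image of $\varphi_U\colon\alpha'\mapsto\alpha'T^dT\p_T$. Putting $\sigma'=0$ collapses \eqref{46} to $i\alpha'\in g^{-s_i}B$ for all $i\in\ZZ$; since $i\alpha'=0$ whenever $w\mid i$, this amounts to $\alpha'\in g^{-t}B$ with $t:=\min\{s_i\mid w\nmid i\}$, so the kernel is $\O_U(tV)$. I would evaluate $t$ from Lemma~\ref{36}: in the first and third cases there is an $i$ with $w\nmid i$ and $s_i=s$ (take $i=q$ when $w\nmid q$, or invoke Lemma~\ref{36}\eqref{36b} when $pd\not\equiv-1\mod q$), giving $t=s$ and kernel $\O_U(sV)$; in the second case $s_i=s$ forces $i\in q\ZZ\subseteq w\ZZ$ by Lemma~\ref{36}\eqref{36a}, so every admissible $i$ has $s_i=s+1$, giving $t=s+1$ and kernel $\O_U(sV+V)$. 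Injectivity of $\varphi_U$ is clear since $T\p_T\ne0$, and the kernel computation gives exactness in the middle, completing all three sequences. The main obstacle is the bookkeeping in the second case, where $w\mid q$ both raises the kernel by one copy of $V$, by destroying the lift at $i=q$, and leaves the target at $\SDer_U(-\log V)(sV)$; keeping the two hypotheses $pd\equiv-1\mod q$ and $w\mid q$ separate is the delicate point, since this is the only case in which the left-hand term jumps from $\O_U(sV)$ to $\O_U(sV+V)$.
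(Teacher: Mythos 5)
Your proposal is correct and takes essentially the same approach as the paper: the image of $\psi_U$ is determined by Lemma~\ref{32} together with the lifts \eqref{48} and \eqref{49} (which the paper establishes in the discussion immediately preceding the proposition), and the kernel is computed from \eqref{46} with $\sigma'=0$ via Lemma~\ref{36}, exactly as in the paper's proof. Your explicit bookkeeping with $t=\min\{s_i\mid w\nmid i\}$ merely spells out what the paper's terse appeal to Lemma~\ref{36} accomplishes in that kernel step.
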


\begin{proof}
It remains to determine the kernel of $\psi_U$.
To this end, let $\delta\in\ker\psi_U$.
By \eqref{75}, this means that $\sigma'=0$.
By condition \eqref{46}, it follows that
\begin{equation}\label{77}
i\alpha'\in g^{-s_i}B,\quad\text{for all }i\in\ZZ.
\end{equation}
If $pd\equiv-1\mod q$ and $w\mid q$, then \eqref{77} implies $\alpha'\in g^{-s-1}B$ by Lemma~\eqref{36}.\eqref{36a}.
In all other cases, \eqref{77} implies  $\alpha'\in g^{-s}B$ by Lemma~\eqref{36}.\eqref{36b}.
This shows that 
\begin{equation}\label{76}
\varphi_U:\alpha'\mapsto\alpha T\p_T=\pi_*\Phi(T^d\alpha')
\end{equation}
induced by $\pi_*\Phi$ is the kernel of $\psi_U$.
\end{proof}

\begin{rmk}\label{84}
In the case where $D$ is a Cartier divisor and $w=\ch(k)=0$, we obtain an exact sequence
\begin{equation*}
\xymat{
0\ar[r] & \O_U(sD)\ar[r]^-{\varphi_U} & (\pi_*\Theta_C)_d\vert_U\ar[r]^-{\psi_U} & \Theta_U(sD) \ar[r] & 0.
}
\end{equation*}
similar to the sequence \eqref{86}, without having to assume each irreducible component of $D$ is Cartier.
To see this, let $g\in Q(B)$ be a defining equation for $D$ and read the preceding arguments in this section with $V=D$ and $p=q=1$. 
Note that, in this case, $m_i=i$ and $s_i=s$ for all $i\in\ZZ$ and hence \eqref{46} reduces to a single condition giving the lift as in \eqref{48}.
\end{rmk}

Finally, we deal with the general case where $D$ has multiple components as in \eqref{88}.

\begin{dfn}\label{12}
For each $d\in\ZZ$ and $w=\ch(k)$, define Weil divisors
\[
W_{D,d}:=\sum_{\substack{p_Vd\equiv-1\mod q_V\\\text{and }w\mid q_V}}V,\quad 
L_{D,d}:=\sum_{\substack{p_Vd\not\equiv-1\mod q_V\\\text{or }w\mid q_V}}V.
\]
\end{dfn}

Note $W_{D,d}=0$ if $w=0$ and that $L_{D,d}$ is reduced effective as required in Definition~\ref{29}.

\begin{rmk}\label{83}\
\begin{asparaenum}

\item\label{83a} In part \eqref{11a}$\leadsto$\eqref{11b} of Demazure's construction (see \S\ref{13}), one can choose different homogeneous rational $T\in Q(A)$, resulting in different $\QQ$-ample $\QQ$-Cartier divisors $D$.
However, the divisors $W_{D,d}$ and $L_{D,d}$ in Definition~\ref{12} are independent of this choice.
To see that one uses the fact that $\divisor(f)$ in \eqref{65} has integer coefficients which do not contribute to $W_{D,d}$ and $L_{D,d}$. 

\item\label{83b} By definition,
\[
\O_X(dD-L_{D,d})=\O_X(dD-D'),\quad D':=\sum_V(1-1/q_V)V.
\]
The divisor $D'$ occurs in Watanabe's formula relating the canonical module $K_A$ of $A$ to the canonical divisor $K_X$ of $X$,
\[
K_A=\bigoplus_{i\in\ZZ}H^0(X,\O_X(K_X+D'+iD)).
\]
In case $X$ is a regular curve, we have 
\[
\SDer_X(dD-\log L_{D,d})\cong\Theta_X(dD-L_{D,d})\cong\O_X(-K_X-D'+dD)
\]
and a generalized (short exact) Euler sequence was described already by Wahl (see \cite[Prop.~3.1, Proof of Thm.~3.3]{Wah15}.
However, the first isomorphism above is a special feature of this case.

\end{asparaenum}
\end{rmk}

\begin{prp}\label{47}
There is a short exact sequence
\[
\xymat{
0\ar[r] & \O_U(dD+W_{D,d})\ar[r]^-{\varphi_U} & (\pi_*\Theta_C)_d\vert_U\ar[r]^-{\psi_U} & \SDer_U(dD-\log L_{D,d}) \ar[r] & 0.
}
\]
\end{prp}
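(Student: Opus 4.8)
The plan is to deduce the general multi-component case from the single-component Proposition~\ref{22} by a local, codimension-one-by-codimension-one comparison, exploiting that all the relevant sheaves are reflexive and hence determined by their behavior away from a codimension-$2$ locus. First I would shrink $U=\Spec B$ further so that I may treat each prime divisor $V$ in \eqref{88} one at a time: on $U$, the sheaves $\O_U(dD+W_{D,d})$, $(\pi_*\Theta_C)_d\vert_U$, and $\SDer_U(dD-\log L_{D,d})$ are all coherent and reflexive (by Lemma~\ref{24}, Lemma~\ref{34}.\eqref{34a}, and the fact that $\O_U(dD+W_{D,d})$ is reflexive), so it suffices to construct the sequence and verify exactness after removing the finitely many codimension-$\ge2$ intersections $V\cap V'$ of distinct components. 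Away from these intersections, at each point only one component $V$ is visible, and the local picture is exactly that of a single $\QQ$-divisor $\frac{p_V}{q_V}V$.

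The key step is then to match the three cases of Proposition~\ref{22} against the definitions of $W_{D,d}$ and $L_{D,d}$ in Definition~\ref{12}. Writing $s=\floor{p_Vd/q_V}$ as in Lemma~\ref{36}, and recalling $\O_U(sV)=\O_U(dD)\vert_U$ near the generic point of $V$ (since $\floor{dD}$ and $sV$ agree there by \eqref{79}), I would observe: when $p_Vd\equiv-1\bmod q_V$ and $w\nmid q_V$, the sequence of Proposition~\ref{22} reads $\O_U(dD)\to\cdots\to\Theta_U(dD)$, and here $V$ contributes neither to $W_{D,d}$ nor to $L_{D,d}$, consistent with $\SDer_U(dD-\log L_{D,d})=\Theta_U(dD)$ and kernel $\O_U(dD)$; when $p_Vd\equiv-1\bmod q_V$ and $w\mid q_V$, the kernel becomes $\O_U(dD+V)=\O_U(dD+W_{D,d})$ and the cokernel $\SDer_U(-\log V)(dD)$, matching $V\subset W_{D,d}$ and $V\subset L_{D,d}$; in the remaining case the kernel is $\O_U(dD)$ and the cokernel $\SDer_U(-\log V)(dD)$, matching $V\notin W_{D,d}$ but $V\subset L_{D,d}$. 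In every case the local sequence is precisely the restriction of the asserted one, using Lemma~\ref{34}.\eqref{34b} (valid since each $V$, hence $\floor{dD}$, is Cartier on $U$) to identify $\SDer_U(-\log L_{D,d})\otimes\O_U(dD)$ with $\SDer_U(dD-\log L_{D,d})$.

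Finally I would globalize over the open locus where $D$ is locally of single-component type. The three sheaves, being reflexive, satisfy the normality property of Lemma~\ref{30}, so the maps $\varphi_U$ and $\psi_U$ constructed off the codimension-$2$ set extend uniquely across it, and exactness—being the statement that a certain complex of reflexive sheaves has vanishing homology, which is itself a codimension-one condition on the torsion-free quotient—extends as well. The two maps are already the restrictions of the intrinsic maps $\pi_*\Phi$ and $\pi_*\Psi$ of \eqref{53} (via \eqref{75} and \eqref{76}), so they patch automatically across overlapping charts without any cocycle computation.

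I expect the main obstacle to be the exactness at the middle term after reintroducing the codimension-$2$ intersection loci: a priori the complex could fail to be exact precisely along $V\cap V'$, and one must argue that reflexivity of the cokernel $\SDer_U(dD-\log L_{D,d})$ together with injectivity of $\varphi_U$ and the generic surjectivity of $\psi_U$ forces exactness everywhere. The cleanest route is to note that $\ker\psi_U/\operatorname{im}\varphi_U$ and $\operatorname{coker}\psi_U$ are coherent sheaves supported in codimension $\ge2$, yet sit inside torsion-free (indeed reflexive) sheaves, and must therefore vanish; this is where the reflexivity established in Lemmas~\ref{24} and \ref{34} does the essential work.
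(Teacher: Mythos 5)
Your first two steps are sound and essentially reproduce the paper's opening moves: off the codimension-two locus $\bigcup_{i\ne j}V_i\cap V_j$ the sequence is the single-component one of Proposition~\ref{22} (in the unified form of Remark~\ref{41}), the local sequences glue because all the maps are induced by $\Phi$ and $\Psi$ of \eqref{53}, and pushing forward across the codimension-two set preserves the first three terms and exactness at the first two positions by reflexivity (Lemmas~\ref{24}, \ref{30}, \ref{34}). Your middle-exactness argument, suitably phrased, also works.

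The genuine gap is the step you treat as formal: surjectivity of $\psi_U$ does not follow from reflexivity. The cokernel of $\psi_U$ is a \emph{quotient} of $\SDer_U(dD-\log L_{D,d})$, not a subsheaf of it or of any torsion-free sheaf, so the claim that it ``sits inside a torsion-free sheaf and must therefore vanish'' is false. A map of reflexive sheaves that is surjective outside a set of codimension $\ge2$ need not be surjective: on $U=\AA^2_k$ with coordinates $x,y$, the map $\O_U^2\to\O_U$, $(a,b)\mapsto ax+by$, between free (hence reflexive) sheaves is surjective off the origin, yet its cokernel $\O_U/\ideal{x,y}$ is nonzero. This failure is exactly what the fourth term $\SH_Z^2(\O_X(dD+W_{D,d}))$ in Proposition~\ref{23} records; if surjectivity were formal, that term would be superfluous. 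Note also that your argument never uses the standing hypothesis that every component of $D$ is Cartier on $U$, which is a warning sign. The paper closes this gap by an explicit construction: given $\sigma\in H^0(U,\SDer_U(dD-\log L_{D,d}))$, it forms the lift $\delta:=\sigma+\alpha T\p_T$ with $\alpha$ as in \eqref{92}, namely the sum of $\frac{p_i}{q_i}\sigma(g_i)/g_i$ over those components with $p_id\equiv-1\bmod q_i$ and $w\nmid q_i$; it then checks on each $U_i$, using the pole bound \eqref{81} to control the cross terms $\alpha_j$, $j\ne i$, along $V_i$, that $\delta$ satisfies the conditions \eqref{48} and \eqref{49}, and only afterwards invokes reflexivity (Lemma~\ref{24}) to extend $\delta$ across the codimension-two locus. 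Some such construction of a lift is indispensable, and your proposal is incomplete without it.
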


\begin{rmk}\label{41}
In particular, in case $D=\frac{p}{q}V$, $\O_U(dD+W_{D,d})=\O_U(sV+W_{D,d})$ which reduces to $\O_U(sV)$ or $\O_U(sV+V)$ depending on the numerical conditions in Proposition~\ref{22}.
Similarly, $\O_U(dD-L_{D,d})=\O_U(sV-L_{D,d})$ and hence
\[
\SDer_U(dD-\log L_{D,d})=\SDer_U(sV-\log L_{D,d})=\SDer_U(-\log L_{D,d})(sV)
\]
by Definition~\ref{29} and Lemma~\ref{34}.
Again this reduces to either $\Theta_U(sV)$ or $\SDer_U(-\log V)(sV)$ depending on the numerical conditions in Proposition~\ref{22}.
For this reason the sequence in Proposition~\ref{47} unifies the three sequences in Proposition~\ref{22}.
\end{rmk}

\begin{proof}[Proof of Proposition~\ref{47}]
We need to first establish the existence of the morphisms $\varphi_U$ and $\psi_U$. 
We denote the irreducible components and coefficients of $D$ by
\[
D=:\sum_i\frac{p_i}{q_i}V_i.
\]
There exists an affine open covering of $U'=U\setminus\bigcup_{i\ne j}V_i\cap V_j$ by $U_i:=U\setminus\bigcup_{i\neq j}V_j$. 
On each $U_i$, $D$ has only one irreducible component. 
Therefore, according to Remark~\ref{41}, we have an exact sequence on each $U_i$,
\[
\xymat{
0\ar[r] & \O_{U_i}(dD+W_{D,d})\ar[r]^-{\varphi_{U_i}} & (\pi_{*}\Theta_{C})_d\vert_{U_i}\ar[r]^-{\psi_{U_i}} & \SDer_U(dD-\log L_{D,d})\vert_{U_i} \ar[r] & 0.
}
\]
These exact sequences glue to an exact sequence on $U'$ because all $\varphi_{U_i}$ and $\psi_{U_i}$ are induced by $\Phi$ and $\Psi$ from \eqref{53} via \eqref{75} and \eqref{76}. 
By Lemmas~\ref{30}, \ref{24}, \ref{34}, pushing forward along $i:U'\to U$ yields a left exact sequence
\[
\xymat{
0\ar[r] & \O_{U}(dD+W_{D,d})\ar[r]^-{i_*\varphi_{U'}} & (\pi_{*}\Theta_C)_d\vert_{U}\ar[r]^-{i_*\psi_{U'}} & \SDer_U(dD-\log L_{D,d})\vert_{U}.
}
\]
We define $\varphi_U$ and $\psi_U$ by $i_*\varphi_{U'}$ and $i_*\psi_{U'}$, respectively.

To check surjectivity of the map $\psi_U$, it suffices to construct a lift along $\psi_U$ for any 
\[
\sigma\in H^0(U,\SDer_U(dD-\log L_{D,d})).
\]
Let $g_i$ be the defining equation for $V_i$ and denote by
\[
s^{(i)} := \min\{\floor{\frac{(j+d)p_i}{q_i}} - \floor{\frac{dp_i}{q_i}} \mid j\in\ZZ\} = \floor{\frac{p_id}{q_i}}
\]
the ``$s$'' for the basic case $D\vert_{U_i}=\frac{p_i}{q_i}V_i\cap U_i$ (see \eqref{79} and \eqref{80}).
Restricting to $U_i$, we find
\begin{equation}\label{81}
\ord_{g_i}(\sigma(b))\geq-s^{(i)}
\end{equation}
all $b\in B$ and all $i$ (see \eqref{54} and \eqref{55}). 
We consider $\delta:=\sigma+\alpha T\p_T\in\Der_k(K(C))$ where 
\begin{equation}\label{92}
\alpha:=\sum_i\alpha_i,\quad
\alpha_i:=
\begin{cases}
\frac{p_i}{q_i}\frac{\sigma(g_i)}{g_i},&\text{ if }p_id\equiv-1\mod q_i\text{ and }w\nmid q_i,\\
0,&\text{ otherwise}.
\end{cases}
\end{equation}
Regrouping the terms of $\alpha'=T^{-d}\alpha$ as
\[
\alpha'=\alpha'_i+\sum_{i\ne j}\alpha'_j
\]
and using \eqref{81}, we see that $\alpha'\vert_{U_i}$ has the shape required in \eqref{48} and \eqref{49} and hence $\delta \in H^0(U_i,(\pi_*\Theta_C)_d)$ by (the proof of) Proposition~\ref{22}.
Thus, $\delta\in H^0(U',(\pi_*\Theta_C)_d)= H^0(U,(\pi_*\Theta_C)_d)$ by gluing and Lemma~\ref{24}. 
By construction and \eqref{75}, $\delta$ lifts the given $\sigma$ proving surjectivity of $\psi_U$.
\end{proof}

\begin{mathias}

\begin{proof}[Mathias' proof of Proposition~\ref{47}]
By Remark~\ref{41}, the sequence exists on $U\setminus\bigcup_{i\ne j}D_i\cap D_j$.
By reflexivity of all members of the sequence and Lemma~\ref{30}, it suffices to check surjectivity of the map $\psi_U$.
To this end, for given $\sigma\in H^0(U,\SDer_U(dD-\log L_{D,d}))$, we consider the lift
\[
\delta:=\sigma+\alpha'T^dT\p_T\in\Der_k(K(C))
\]
of $\sigma$ where, denoting $V_i=:\divisor(g_i)$,
\[
\alpha':=\sum_j\alpha'_j,\quad
\alpha'_j:=
\begin{cases}
\frac{p_j\sigma'(g_j)}{q_jg_j},&\text{ if }p_jd\not\equiv-1\mod q_j\text{ and }w\nmid q_j,\\
0,&\text{ otherwise}.
\end{cases}
\]
Since $\alpha'$ specializes, locally for each $V$, to a valid choice of \eqref{48} and \eqref{49}, we have
$\delta\in H^0(U,(\pi_*\Theta_C)_d)$ by Lemmas~\ref{30} and \ref{24}.
\end{proof}

\end{mathias}

\begin{xia}

\begin{proof}[Xia's proof of Proposition~\ref{47}]
In the general situation, $D$ has multiple components. 
The analysis of the derivations on the cylinder is very similar to the basic case where $D$ has only one component. 
We will briefly demonstrate how it works out. 
We assume for simplicity that 
\begin{equation*}
D= \frac{p_1}{q_1}V_1 + \frac{p_2}{q_2}V_2,
\end{equation*}
and $V_1$ and $V_2$ are defined by $g$ and $h$ respectively. 
The same method which we shall use applies with no difficulty to the general case.

As we have done in the basic case, we use the notation:
\begin{align*}
m_i:=\floor{p_1i/q_1},&\quad n_i:=\floor{p_2i/q_2}\\
s_i:=m_{i+d}-m_i,&\quad t_i:=n_{i+d}-n_i\\
s:=\min\{s_i\mid i\in\ZZ\},&\quad t:=\min\{t_i\mid i\in\ZZ\}
\end{align*} 

The algebra $A$ now takes the form:
\[
A=\bigoplus_{i\in\ZZ}Bg^{-m_i}h^{-n_i}T^i\subset B_{gh}[T,T^{-1}].
\]

For a $\delta \in \Der_kA$, we can decompose it as
\[
\delta=\sigma+\alpha T\p_T,\quad\sigma\in A_{gh}\Der_kB_{gh},\quad\alpha\in A_{gh}.
\] 

If $\delta \in (\Der_kA)_d$, we let $\delta' = T^{-d}\delta, \sigma' = T^{-d}\sigma, \alpha'= T^{-d}\alpha$. 
They satisfy the conditions 
\[
\delta'+i\alpha':Bg^{-m_i}h^{-n_i}\to Bg^{-m_{i+d}}h^{-n_{i+d}} \quad \text{for all } i\in\ZZ.
\]

This means that
\[
\sigma'(b)+b(i\alpha'-m_i\frac{\sigma'(g)}{g}-n_i\frac{\sigma'(h)}{h})\in g^{-s_i}h^{-t_i}B \quad\text{ for all }b\in B.
\]

Or equivalently
\begin{align}
&\sigma':B\to g^{-s_i}h^{-t_i}B \quad \text{for all }i.\label{58}\\
&i\alpha'-m_i\frac{\sigma'(g)}{g} - n_i\frac{\sigma'(h)}{h}\in g^{-s_i}h^{-t_i}B \quad \text{for all }i.\label{59}
\end{align}

By Lemma~\ref{36}, $s_i$ and $t_i$ obtain the minimums simultaneously when $i=q_1 q_2$. 
Therefore \eqref{58} is equivalent  to 
\begin{equation}\label{60}
\sigma':B\to g^{-s}h^{-t}B.
\end{equation}

This tells us that there is always a map 
\begin{equation*}
(\pi_*(\Theta_C))_d\vert_U \to \Theta_U(sV_1 + tV_2)
\end{equation*}

Condition $\eqref{58}$ implies that 
\begin{equation*}
\frac{\sigma'(h)g^{s_i}}{h} \in B_h \quad\text{for all }i\in\ZZ.
\end{equation*}

Therefore condition \eqref{59} implies that 
\begin{equation*}
i\alpha'g^{s_i}-m_i\frac{g^{s_i}\sigma'(g)}{g} \in B_h \quad\text{for all }i\in\ZZ.
\end{equation*}

On the other hand, condition \eqref{60} implies that 
\begin{equation*}
\sigma' : B_h \to g^{-s}B_h
\end{equation*}

The last two conditions reduces us to the situation of \eqref{45} and \eqref{46}. 
Thus, if $p_1d \not\equiv d-1\ (\textrm{mod}\ q_1)$ or $w \mid q$, we have $g^s\sigma'(g)\in gB_h$ (because condition \eqref{55} and \eqref{46} implies the numerical condition for the triple $(p_1,q_1,d)$, and if the numerical condition fails, we must have condition \eqref{54}). 
We also know that $g^sh^t\sigma'(g) \in B$ according to \eqref{60}. 
Therefore we get $g^sh^t\sigma'(g)\in B\cap gB_h = gB$. 
A similar analysis can be done for $g^sh^t\sigma'(h)$.

This tells us that there is a morphism
\begin{equation*}
(\pi_*(\Theta_C))_d\vert_U \to \SDer_U(-\log V_1)(sV_1 + tV_2)
\end{equation*}
if the numerical condition for the triple $(p_1,q_1,d)$ is not satisfied.

In the general case, we must have a morphism
\begin{equation*}
(\pi_*(\Theta_C))_d\vert_U \to \SDer_U(-\log L_d)(\sum_i s_iV_i)
\end{equation*}
where $L_d$ is the union of the $V_i$ whose coefficients do not satisfy the corresponding numerical condition.

Given a $\sigma' \in H^0(U,\SDer_U(-\log L_d)(\sum_i s_iV_i))$, we set $\alpha' = \sum_i\alpha'_i$ where $\alpha'_i = \frac{p_i\sigma'(g_i)}{q_ig_i}$ if the numerical condition for $(p_i,q_i,d)$ is satisfied, and $\alpha'_i = 0$ if the numerical condition is not satisfied. 
We claim $\delta' = \sigma' + \alpha'T\partial_t$ is a lift of $\sigma'$ to $H^0(U,(\pi_*(\Theta_C))_d)$. 
One must check $\alpha'$ and $\sigma'$ satisfy a condition analogous to \eqref{59}. 
Let me show how this can be done if $D$ has only 2 irreducible components. 
In this case, we are bound to show that
\begin{equation*}
i(\alpha'_1 + \alpha'_2)g^{s_i}h^{t_i}- m_i\frac{g^{s_i}h^{t_i}\sigma'(g)}{g}-n_i\frac{g^{s_i}h^{t_i}\sigma'(h)}{h} \in B \quad\text{for all $i$.}
\end{equation*}
Or equivalently, to show
\begin{equation*}
(i\alpha'_1 - m_i\frac{\sigma'(g)}{g})g^{s_i}h^{t_i} + (i\alpha'_2 - n_i\frac{\sigma'(h)}{h})g^{s_i}h^{t_i} \in B \quad\text{for all }i.
\end{equation*}

But for any $i$, each summand is already in $B$ by our choice of $\alpha'$. 
\end{proof}

\end{xia}

\begin{rmk}\label{57}
In the special case $q=1$, it is clear from our construction that 
\[
(\pi_*\Theta_C)_{d_2}\vert_U=(\pi_*\Theta_C)_{d_1}\vert_U\otimes_{\O_U}\O_U((d_2-d_1)D).
\]
\end{rmk}

\section{Generalized Euler sequence}

We can now globalize our local construction from \S\ref{71}. 

\begin{prp}\label{23}
Denote by $Z:=X\setminus X^\reg$ the complement of the regular part of $X$.
Then, for each $d\in\ZZ$, there is an exact sequence
\begin{equation}\label{25}
\xymat@C=8pt{
0\ar[r]&\O_X(dD+W_{D,d})\ar[r]&(\pi_*\Theta_C)_d\ar[r]&\SDer_X(dD-\log L_{D,d})\ar[r]&\SH_Z^2(\O_X(dD+W_{D,d}))
}
\end{equation}
whose first three terms are coherent reflexive $\O_X$-modules.
\end{prp}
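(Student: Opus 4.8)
The plan is to build the global sequence from the \emph{short} exact sequence that Proposition~\ref{47} already supplies over the regular locus, and then to push it forward to all of $X$, measuring the failure of right-exactness by a local cohomology term.

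First I would restrict attention to $X^\reg=X\setminus Z$. This locus is covered by regular affine opens $U=\Spec B$, and on any such $U$ every prime Weil divisor is Cartier (a regular local ring is factorial); in particular each component $V$ of $D$ is Cartier on $U$, so the hypotheses of Proposition~\ref{47} hold there. The resulting short exact sequences
\[
0\to\O_U(dD+W_{D,d})\xrightarrow{\varphi_U}(\pi_*\Theta_C)_d\vert_U\xrightarrow{\psi_U}\SDer_U(dD-\log L_{D,d})\to0
\]
glue over $X^\reg$, since $\varphi_U$ and $\psi_U$ are by construction the restrictions of the globally defined maps induced by $\Phi$ and $\Psi$ from \eqref{53} via \eqref{75} and \eqref{76}. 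This produces a short exact sequence of the three reflexive sheaves restricted to $X^\reg$.

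Next, write $j:X^\reg\into X$ for the open inclusion. As $A$ is normal, $X$ is $(R_1)$ by Serre's criterion, whence $\codim_XZ\ge2$. Applying the left-exact functor $j_*$ to the short exact sequence on $X^\reg$ yields an exact sequence whose last term is $R^1j_*\bigl(\O_{X^\reg}(dD+W_{D,d})\bigr)$. Each of the sheaves $\O_X(dD+W_{D,d})$, $(\pi_*\Theta_C)_d$ and $\SDer_X(dD-\log L_{D,d})$ is coherent and reflexive, hence normal by Lemma~\ref{30} (using Lemma~\ref{24} and Lemma~\ref{34} for the latter two, and the fact that $\O_X(dD+W_{D,d})=\O_X(\floor{dD+W_{D,d}})$ is reflexive by construction, see \S\ref{13}). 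Since $Z$ has codimension $\ge2$, the normality property identifies each $j_*(\F\vert_{X^\reg})$ with $\F$, so the first three terms of the pushed-forward sequence are precisely those of \eqref{25}, and they are coherent reflexive as claimed.

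Finally I would identify the rightmost term. For the open--closed decomposition $X=X^\reg\sqcup Z$, the standard long exact sequence comparing local cohomology with support in $Z$ to $j_*$ and $R^\bullet j_*$ gives canonical isomorphisms $R^ij_*(\F\vert_{X^\reg})\cong\SH_Z^{i+1}(\F)$ for all $i\ge1$ (the low-degree part being $0\to\SH_Z^0(\F)\to\F\to j_*j^*\F\to\SH_Z^1(\F)\to0$). Taking $i=1$ and $\F=\O_X(dD+W_{D,d})$ gives
\[
R^1j_*\bigl(\O_{X^\reg}(dD+W_{D,d})\bigr)\cong\SH_Z^2\bigl(\O_X(dD+W_{D,d})\bigr),
\]
and composing the boundary map with this isomorphism completes the sequence \eqref{25}. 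Most of the genuine content sits in Proposition~\ref{47}, so the main obstacle here is the bookkeeping of the first step: checking that the locally constructed maps really glue to maps of the intended global sheaves, after which the identification of $j_*$ with the reflexive extension and of $R^1j_*$ with $\SH_Z^2$ is formal once $\codim_XZ\ge2$ is in hand.
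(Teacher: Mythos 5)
Your proposal is correct and follows essentially the same route as the paper's own proof: restrict to $X^\reg$ where Proposition~\ref{47} gives the short exact sequence, push forward along the open inclusion, use coherence plus reflexivity (Lemmas~\ref{24}, \ref{30}, \ref{34}) to identify the first three pushed-forward terms with the global sheaves, and identify $R^1$ of the pushforward with $\SH_Z^2$ — the paper cites \cite[Cor.~1.9]{Har67} for this last step, while you derive it from the standard local cohomology exact sequence. The extra details you supply (the gluing of the local sequences over $X^\reg$ and the fact that $\codim_XZ\ge2$ follows from normality) are exactly what the paper leaves implicit.
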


\begin{proof}
Denote by $i:X^\reg\into X$ the inclusion.
By Proposition~\ref{47}, 
\begin{equation}\label{43}
0\to i^*\O_X(dD+W_{D,d})\to i^*(\pi_*\Theta_C)_d\to i^*\SDer_X(dD-\log L_{D,d})\to0
\end{equation}
is exact.
Applying $i_*$ yields
\begin{gather}\label{40}
0\to i_*i^*\O_X(dD+W_{D,d})\to i_*i^*(\pi_*\Theta_C)_d\to i_*i^*\SDer_X(dD-\log L_{D,d})\to\cdots\\
\nonumber\cdots\to R^1i_*i^*\O_X(dD+W_{D,d}).
\end{gather}
By Lemmas~\ref{24} and \ref{34}, $(\pi_*\Theta_C)_d$ and $\SDer_X(dD-\log L_{D,d})$ are coherent reflexive $\O_X$-modules.
By Lemma~\ref{30}, $\F=i_*i^*\F$ for any coherent reflexive $\O_X$-module and, by \cite[Cor.~1.9]{Har67}, $R^1i_*i^*=\SH_Z^2$.
This turns \eqref{40} into \eqref{25}.
\end{proof}

Motivated by Lemma~\ref{39}, we would like to derive a short exact sequence of global sections from \eqref{25}.
This is possible in case $\O_X(dD+W_{D,d})$ has sufficient depth.
Restricting ourselves to the case where $W_{D,d}=0$, we are concerned with the depth of $\O_X(dD)\cong\O_X(d)$.
Watanabe~\cite[Rem.~(2.11)]{Wat81} showed that $X$ is Cohen--Macaulay if $A$ is so.
More generally, Flenner~\cite[(2.3)~Kor.]{Fle81b} showed that Serre's condition $(S_\ell)$ passes from $\Gamma$ to $X$.
We extend his result to $\O_X(i)$ as follows.

\begin{draft}

\begin{prp}\label{42}
Assume that $A$ is a Cohen--Macaulay ring.
Then, for each $d\in\ZZ$, $\O_X(d)$ is a Cohen--Macaulay $\O_X$-module.
\end{prp}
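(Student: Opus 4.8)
The plan is to verify the Cohen--Macaulay property locally on the standard affine cover of $X=\Proj A$, tracking the twist $\O_X(d)$ through the graded structure of the relevant localizations. Recall that $X$ is covered by the affine opens $D_+(f)=\Spec A_{(f)}$ for homogeneous $f\in A_+$, where $A_{(f)}=(A_f)_0$ and $\O_X(d)\vert_{D_+(f)}=\widetilde{(A_f)_d}$ is the sheaf associated with the degree-$d$ component of $A_f$, viewed as an $A_{(f)}$-module. Since the Cohen--Macaulay property of a coherent sheaf is local, it suffices to prove that $(A_f)_d$ is a Cohen--Macaulay $A_{(f)}$-module for every such $f$, say of degree $e:=\deg f\ge1$.

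The key input is the structure of the $\ZZ$-graded ring $A_f$, in which $f$ is a unit of degree $e$. First, $A_f$ is Cohen--Macaulay as a ring, being a localization of the Cohen--Macaulay ring $A$. Multiplication by $f$ identifies $(A_f)_n\xrightarrow{\sim}(A_f)_{n+e}$, so, writing $R:=A_{(f)}[f,f^{-1}]$ for the Laurent polynomial subring and $M:=\bigoplus_{r=0}^{e-1}(A_f)_r$, one obtains an isomorphism of $R$-modules $A_f\cong M\otimes_{A_{(f)}}R$. Here each $(A_f)_r$ is a finite $A_{(f)}$-module; and since $A$ is a domain with generator degrees coprime, $A_f$ has nonzero elements in every degree, so each $(A_f)_r$ is a nonzero torsion-free $A_{(f)}$-module of rank $1$, in particular of full support.

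From here I would descend the Cohen--Macaulay property in two stages along $A_{(f)}\to R\to A_f$. The extension $R\to A_f$ is finite and injective between domains, so $A_f$ is Cohen--Macaulay as an $R$-module (depth and dimension over $R$ agreeing, by finiteness, with those over $A_f$). The extension $A_{(f)}\to R$ is faithfully flat with regular (Laurent polynomial) fibers; applying flat descent of the Cohen--Macaulay property \cite{Mat89} to $A_f\cong M\otimes_{A_{(f)}}R$ then shows that $M$ is a Cohen--Macaulay $A_{(f)}$-module. Finally, as all summands $(A_f)_r$ have the same full support, a direct sum is Cohen--Macaulay exactly when each summand is; hence $(A_f)_d\cong(A_f)_{d\bmod e}$ is Cohen--Macaulay over $A_{(f)}$, which is what we want.

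The step demanding the most care is the passage from $A$ to the $A_{(f)}$-module $(A_f)_d$, because $A_{(f)}\to A_f$ is exactly as non-flat as $\pi$ fails to be smooth (cf.\ Remark~\ref{87}): when $D$ is not Cartier the modules $(A_f)_r$ are genuinely non-free, so one cannot argue by flat base change along $A_{(f)}\to A_f$ directly. Interposing the Laurent extension $R=A_{(f)}[f,f^{-1}]$---flat over $A_{(f)}$ and receiving $A_f$ as a finite module---is what repairs this, and the bookkeeping of depth and dimension under the finite morphism $R\to A_f$, where one uses that $A_f$ is a domain so that all its localizations are equidimensional over $R$, is the point to handle carefully.
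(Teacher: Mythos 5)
Your argument is correct, but it follows a genuinely different route from the paper's. The paper proves the proposition globally via a Veronese reduction: it picks $N$ with $X=\Proj A=\Proj A^{(N)}$ such that $A^{(N)}$ satisfies condition (\#) of \cite[Ch.~5, \S1]{GW78}, realizes $\O_X(d)$ as the sheaf of the graded $A^{(N)}$-module $M=\bigoplus_{i\in\NN}A_{iN+d}$, proves $M$ is maximal Cohen--Macaulay by taking a full homogeneous $A$-regular sequence and replacing its entries by their $N$-th powers, and then descends to stalks using the Laurent splitting $M_{(\qq)}\cong(M_{(\qq)})_0[T,T^{-1}]$ of homogeneous localizations from \cite[Lem.~(5.1.10)]{GW78}. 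You dispense with the Veronese altogether: on each $D_+(f)$ you interpose the Laurent ring $R=A_{(f)}[f,f^{-1}]$ between $A_{(f)}$ and $A_f$, observe $A_f\cong\bigl(\bigoplus_{r=0}^{e-1}(A_f)_r\bigr)\otimes_{A_{(f)}}R$, transfer Cohen--Macaulayness of the ring $A_f$ to its $R$-module structure through the finite extension $R\subset A_f$, descend along the faithfully flat map $A_{(f)}\to R$, and finally extract the summand $(A_f)_d$. Both proofs ultimately rest on the same Laurent-polynomial phenomenon; the paper arranges (after Veronese) for the splitting to hold on the nose in homogeneous localizations, whereas you let it hold only up to a finite extension and pay with two descent steps (finite and flat). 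Your route is more self-contained---no condition (\#) and no Veronese bookkeeping, and all residues $r$ are handled at once---at the price of invoking flat descent of Cohen--Macaulayness \cite{Mat89} and a fiberwise equidimensionality argument.

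On that last point, your justification that ``$A_f$ is a domain, so its localizations are equidimensional over $R$'' is the one place needing strengthening: for a finite extension of domains, primes of $A_f$ lying over a fixed prime of $R$ need not have equal height in general, and equal height is exactly what is required, since depth over $R$ localizes as a minimum over the fiber while dimension localizes as a maximum. Here it does hold, either because $R$ is normal ($A_{(f)}$ is the degree-zero part of the normal domain $A_f$, hence normal, and normality passes to Laurent extensions), so going-down applies, or because $R$ is finitely generated over $k$, hence universally catenary, so the dimension formula applies to the finite extension of domains $R\subset A_f$. With that justification supplied, your proof is complete.
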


\begin{proof}
For the Veronese subring $A^{(N)}\subset A$, we have $X=\Proj A=\Proj A^{(N)}$.
The $\O_X$-module $\M:=\O_X(d)=\tilde M$ is associated to the graded $A^{(N)}$-module
\[
M:=A^{(N)+d}:=\bigoplus_{i\in\NN}A_{iN+d}.
\]
Note that $A$ and $A^{(N)}$ are *local with *maximal ideals $A_+$ and $A^{(N)}_+$ respectively.
By \cite[Prop.~1.5.15]{BH93},
\[
\grade(A_+,A)=\depth A_{A_+}=\dim A_{A_+}=\dim A=:n.
\]
By \cite[Prop.~1.5.11]{BH93}, there is an $A$-regular sequence $x_1,\dots,x_n\in A_+$ of homogeneous elements.
Replacing each $x_i$ by $x_i^N$, we may assume that $x_1,\dots,x_n\in A^{(N)}_+$ form a regular sequence on $M$.
By \cite[Prop.~1.5.15]{BH93} and since $A^{(N)}\subseteq A$ is integral, we have
\[
n\le\grade(A^{(N)},M)=\depth M_{A^{(N)}_+}\le\dim M_{A^{(N)}_+}\le\dim A^{(N)}_{A^{(N)}_+}=\dim A_{A_+}=n.
\]
By \cite[Exc.~2.1.27]{BH93}, $M$ is then a maximal Cohen--Macaulay module.

We are therefore reduced to showing that for an arbitrary Cohen--Macaulay $A$-module $M$ the associated coherent $\O_X$-module $\M:=\tilde M$ is Cohen--Macaulay assuming that $A$ satisfies the condition (\#) of \cite[Ch.~5, \S1]{GW78}.
Let $\pp\lhd A$ be a graded prime ideal.
Then $M_{(\pp)}$ is a Cohen--Macaulay module over the Cohen--Macaulay ring $A_{(\pp)}$ by \cite[Exc.~2.1.27]{BH93}.
The argument in the proof of \cite[Lem.~(5.1.10)]{GW78} completes the proof.
\end{proof}

\end{draft}

\begin{xia}

\begin{proof}[Xia's proof of Proposition~\ref{42}]
Let $\pp$ be a homogeneous prime of $A$, corresponding to a closed point of $\Proj A$. 
We have $\dim A_{\pp} = \grade(\pp A_{\pp}, A_{\pp}) = \grade(\pp,A)$. 
Both equalities need the Cohen--Macaulayness of $A$. 
For the second equality, recall that 
\begin{equation*}
\grade(\pp,A) = \min\{i,\Ext_A^{i}(A/\pp,A) \neq 0\},
\end{equation*}
and $\Ext_A^{i}(A/\pp,A) = 0$ if and only if $\Ext_{A_{\mm}}^{i}(A_{\mm}/\pp A_{\mm},A_{\mm}) = \Ext_A^{i}(A/\pp,A) \otimes_A A_{\mm} = 0$ for all maximal ideal $\mm$ lying above $\pp$. 
Therefore we can find a maximal ideal $\mm$ of $A$, lying above $\pp$, such that $\grade(\pp,A) = \grade(\pp A_{\mm}, A_{\mm})$. 
The latter is equal to $\grade(\pp A_{\pp}, A_{\pp})$, a well known property whose proof can be found in Matsumura's commutative ring theory theorem 17.3.

If $\dim A_{\pp} = n$, then we can find a homogeneous regular $A$-sequence $x_1, \ldots, x_n$, which is contained in $\pp$. 
For any positive integer $N$, $x_1^N, \ldots, x_n^N$ is a homogeneous regular sequence for both $A$ and $A^{(N)}$, contained in $\pp \cap A^{(N)}$.   

Now, Let $N$ be an integer such that $ND$ is an ample Cartier divisor on $X = \Proj A$. 
Let $R = A^{(N)}$ be the $N$-th Veronese subring of $A$, let $\mathfrak{q} = \pp \cap R$, also let 
\begin{equation*}
M = \bigoplus_{i \geq 0}A_{iN+d}. 
\end{equation*}  

We observe that $M$ is a graded $R$-module, and the coherent sheaf $\tilde{M}$ on $\Proj R$ is isomorphic to $\O_X(d)$ under the isomorphisms $\Proj R \cong \Proj A \cong X$. 
The construction above implies that we can find a homogeneous $M$-sequence of length $n$ in $\qq$. 

After passing from $A$ to its Veronese subring $R$, we see that $R$ satisfies the condition (\#) of \cite[Ch.~5, \S1]{GW78} and $\mathfrak{q}$ corresponds to a closed point on $\Proj R$. 
The analysis in \cite[5.1.10]{GW78} shows that $R_{(\mathfrak{q})} \cong (R_{(\mathfrak{q})})_0[T,T^{-1}]$ and $M_{(\mathfrak{q})} \cong (M_{(\mathfrak{q})})_0[T,T^{-1}]$. 
The previously constructed homogeneous $M$-sequence of length $n$ now gives rise to a homogeneous $M_{(\mathfrak{q})}$-sequence of length $n$, contained in $\mathfrak{q} R_{(\mathfrak{q})}$. 
By multiplying powers of $T$, we can shift all elements in this sequence to degree $0$ and get a new $M_{(\mathfrak{q})}$-sequence whose elements are in $(R_{(\mathfrak{q})})_0$. 
It is clear that this sequence is also a $(M_{(\mathfrak{q})})_0$-sequence. 

This shows that $\grade(\mathfrak{q}(R_{(\mathfrak{q})})_0, (M_{(\mathfrak{q})})_0) \geq n = \dim A_{\pp} = \dim (R_{(\mathfrak{q})})_0$. 
Since the depth of $(M_{(\mathfrak{q})})_0$ is always less or equal to $\dim (R_{(\mathfrak{q})})_0$, we conclude that $\O_X(d) \cong \tilde{M}$ is Cohen--Macaulay. 
\end{proof}

\end{xia}

\begin{lem}\label{31}
Let $A$ be a finitely generated positively graded algebra over a field $k$ of characteristic $\ch k=0$.
Let $f\in A_d$ be a non zero divisor.
If $A$ satisfies Serre's condition $(S_\ell)$ then the same holds true for the $(A_f)_0$-modules $(A_f)_i=(A(i)_f)_0$ where $i\in\ZZ$.
\end{lem}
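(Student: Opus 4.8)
The plan is to localize, adjoin a root of $f$ to reach a Laurent polynomial ring, and then descend Serre's condition summand by summand. First I would note that $A_f$ inherits $(S_\ell)$ from $A$, since $(S_\ell)$ passes to any localization. Write $S:=A_f$, $R:=(A_f)_0$ and $M_i:=(A_f)_i$, so $f$ is a homogeneous unit of degree $d$ in the $\ZZ$-graded ring $S$. Here the hypothesis $\ch k=0$ does the essential work: it makes $d$ invertible, so the finite free $S$-algebra $S':=S[u]/(u^d-f)$ with $\deg u=1$ has unit discriminant and is therefore \'etale over $S$. Being flat with reduced $0$-dimensional (hence regular) fibres, $S\to S'$ preserves depth and dimension at corresponding primes, so $S'$ again satisfies $(S_\ell)$. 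Now $u$ is a homogeneous unit of degree $1$ in $S'$, whence $S'=R'[u,u^{-1}]$ is a Laurent polynomial ring over $R':=(S')_0$; localizing at an extended prime $\qq R'[u,u^{-1}]$ is a flat base change with field fibre $\kappa(\qq)(u)$, so $R'$ inherits $(S_\ell)$ from $S'$.

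It then remains to pass from $R'$ to the $M_i$. Since $u^r$ identifies the pieces, there is an isomorphism of $R$-modules $R'\cong\bigoplus_{r=0}^{d-1}M_{-r}$, and as multiplication by $f$ gives $M_i\cong M_{i+d}$, the summands $M_{-r}$ exhaust all isomorphism classes among the $M_i$. A finite direct sum satisfies $(S_\ell)$ iff each summand does, so it suffices to prove that $R'$ satisfies $(S_\ell)$ \emph{as an $R$-module}. The map $R\into R'$ is module-finite, because each $M_{-r}$ is a finite $R$-module: $A_f$ is module-finite over its $d$-th Veronese subring $R[f,f^{-1}]$, the graded-piece statement being the usual finiteness of Veronese modules over Veronese rings.

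The heart of the matter, and the step I expect to be the main obstacle, is that $R\into R'$ need \emph{not} be flat — indeed $M_{-r}$ is the restriction of $\O_X(-r)$ to the chart $\Spec R\subset\Proj A$ and is typically non-locally-free — so one cannot simply base-change $(S_\ell)$ along it. What rescues the transfer is that the fibres of $\Spec R'\to\Spec R$ are nevertheless \emph{equidimensional}. To see this, fix $\qq\in\Spec R$ and a prime $Q\mid\qq$ of $R'$, and let $\pp':=QR'[u,u^{-1}]\cap S$ be the contraction to $S$ of the homogeneous prime $QS'$. Because $S'$ is a Laurent polynomial ring over $R'$ one has the clean (no-shift) equality $\dim R'_Q=\dim S'_{QS'}$; because $S\to S'$ is \'etale this equals $\dim S_{\pp'}$; and $\pp'$ is a relevant homogeneous prime of $S=A_f$ (it avoids the unit $f$) contracting to $\qq$, so — the fibres of $\Spec A_f\to\Spec R$ being one-dimensional $\GG_m$-orbits and $S$ catenary — we get $\dim S_{\pp'}=\operatorname{ht}\pp'=\operatorname{ht}\qq=\dim R_\qq$, independently of $Q$. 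Thus $\dim R'_Q=\dim R_\qq$ for every $Q\mid\qq$.

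With equidimensionality in hand I would conclude as follows. Grade is insensitive to module-finite extensions, so over the semilocal ring $R'_\qq$,
\[
\depth_{R_\qq}R'_\qq=\min_{Q\mid\qq}\depth_{R'_Q}R'_Q\ge\min_{Q\mid\qq}\min(\ell,\dim R'_Q)=\min(\ell,\dim R_\qq)=\min(\ell,\dim_{R_\qq}R'_\qq),
\]
where the inequality uses that $R'$ is $(S_\ell)$ over itself and the last two equalities use equidimensionality. Hence $R'$ is $(S_\ell)$ over $R$, and therefore so is each direct summand $M_{-r}$ (depth of a summand is at least the depth of the whole sum, while its dimension is at most that of the sum), and finally every $M_i$. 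The delicate point throughout is exactly this displayed chain: without equidimensionality of the fibres the minimum over $Q$ could not be matched with $\dim R_\qq$, and the naive finite descent of $(S_\ell)$ would fail by one.
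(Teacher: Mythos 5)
Most of your proposal is correct, and its first half is in fact the paper's argument in disguise: your $S'=S[u]/\ideal{u^d-f}$ is isomorphic as a graded $S$-algebra to $(A/\ideal{f-1})[T^{\pm1}]$, so your $R'=(S')_0$ is exactly the ring $B=A/\ideal{f-1}$ that the paper works with, and your chain ``\'etale ascent to $S'$, faithfully flat descent to $R'$'' is how the paper (quoting Flenner) gets that $B$ is $(S_\ell)$ --- with the small advantage that you never need to pass to $\bar k$ or invoke eigenspace decompositions. The decomposition $R'\cong\bigoplus_{r=0}^{d-1}M_{-r}$, the semilocal formula $\depth_{R_\qq}R'_\qq=\min_{Q\mid\qq}\depth_{R'_Q}R'_Q$, and the passage from a direct sum to its summands are all sound; your bookkeeping here legitimately replaces the paper's norm/Reynolds-operator induction. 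The two arguments diverge exactly at the step you yourself call the heart of the matter, and that is where there is a genuine gap: the equality $\operatorname{ht}\pp'=\operatorname{ht}\qq$ does \emph{not} follow from ``$S$ catenary and the fibres are one-dimensional $\GG_m$-orbits''. Heights can drop along non-flat finite-type (even finite) maps of perfectly catenary rings: for $C=k[x,y,z]/\ideal{xz,yz}$ finite over a Noether normalization $k[x,y+z]$, the closed point $(0,0,1)$ on the line component has height $1$ in $C$ but lies over a height-$2$ maximal ideal downstairs. So catenarity plus a statement about fibre dimensions cannot by itself carry the conclusion; some mechanism must match the components of $\Spec S$ through $\pp'$ with the components of $\Spec R$ through $\qq$. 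Moreover, in the generality of the lemma ($A$ only $(S_\ell)$, not normal) the irreducibility of the fibres is itself an unproved assertion: Demazure's description of the fibres, cited in the paper, is established only in the normal setting.

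Both missing points are supplied by the grading, used in an essential way. For homogeneous $s\in S$ one has $s\in\pp'$ if and only if $s^d\in\pp'\cap S^{(d)}$, where $S^{(d)}=R[f,f^{-1}]$ is the $d$-th Veronese subring; hence a homogeneous prime of $S$ is determined by its contraction to $R$. This gives (i) each fibre of $\Spec S\to\Spec R$ contains exactly one homogeneous prime, namely its generic point, so the fibres are irreducible; (ii) any chain $\qq_0\subsetneq\cdots\subsetneq\qq_h=\qq$ in $R$ lifts to a chain of homogeneous primes of $S$ ending at $\pp'$ (the unique homogeneous primes over the $\qq_j$ are nested because their homogeneous elements are), whence $\operatorname{ht}\pp'\ge\operatorname{ht}\qq$; and (iii) incomparability for the integral extension $S^{(d)}\subseteq S$ gives $\operatorname{ht}\pp'\le\operatorname{ht}\qq S^{(d)}=\operatorname{ht}\qq$. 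Alternatively --- and this is precisely how the paper closes the same hole --- after the harmless base change adjoining a primitive $d$-th root of unity, $\mu_d$ acts on $R'$ with invariant ring $R$, so the primes of $R'$ over a fixed $\qq$ are conjugate and in particular all of the same height; this conjugacy (proved in the paper by prime avoidance and norms) is the paper's substitute for your asserted equidimensionality. With either repair inserted at this one step, your proof is complete.
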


\begin{proof}\pushQED{\qed}
Clearly $(A_f)_i\otimes_k\bar k\cong((A\otimes_k\bar k)_{f\otimes 1})_i$ and $A$ satisfies $(S_\ell)$ if and only if $A\otimes_k\bar k$ does.
We may therefore assume that $k=\bar k$.
Let $\ZZ_d$ be the cyclic group of order $d$.
For any $k[\ZZ_d]$-module $M$ we denote by $M^{\bar i}$ the eigenspace for the character $\bar 1\mapsto\zeta_d^i$ where $\zeta_d\in k$ is a primitive $d$-th root of unity.
Setting $A_i\subset A^{\bar i}$, $A$ becomes a $k[\ZZ_d]$-module.
Since $f\in A_d$ is $\ZZ_d$-invariant, there is an induced action on $B:=A/\ideal{f-1}$.
One easily shows (see \cite[Thm.~3.1]{Dol03}) that $B$, and hence each $B^{\bar i}$, is a finitely generated module over the finitely generated $k$-algebra $B^{\bar 0}$.
By \cite[\S2]{Fle81b}, $B$ is $(S_\ell)$ and $B^{\bar 0}=(A_f)_0$; analogously, $B^{\bar i}=(A_f)_i$.
For $\pp\in\Spec B^{\bar 0}$, $(B^{\bar i})_\pp\cong(B_\pp)^{\bar i}$.
We may therefore assume that $B^{\bar 0}$ is local with maximal ideal $\mm$.
The primes over $\mm$ are conjugates under $\ZZ_d$.
In fact, assume that there are two invariant non-empty sets $\{\pp_i\}$ and $\{\qq_j\}$ of primes over $\mm$.
Then prime avoidance yields an element $b\in\bigcap_i\pp_i\setminus\bigcup_j\qq_j$.
Its norm $b_0=\prod_{\bar i\in\ZZ_d}\bar i.b$ has the same property and $b_0\in\pp_i\cap B^{\bar 0}=\mm\subset\qq_j$, contradicting to $b_0\not\in\qq_j$.
It follows the $\dim B_\pp=\dim B^{\bar 0}$ for all $\mm\subset\pp\in\Spec B$.
By \cite[Prop.~1.2.10.(a)]{BH93} and since $B$ is $(S_\ell)$,
\[
\grade(\mm B,B)=\min\{\depth B_\pp\mid\mm\subset\pp\in\Spec B\}\ge\min\{\ell,\dim B^{\bar 0}\}
\]
Now we apply the argument from the proof of \cite[Prop.~1]{Fog81}:
If $b\in\mm B$ is $B$-regular, then its norm $b_0=\prod_{\bar i\in\ZZ_d}\bar i.b$ is $B$-regular and $b_0\in\mm B\cap B^{\bar 0}=\mm$.
Moreover, $(B/Bb_0)^{\bar 0}=B^{\bar 0}/B^{\bar 0}b_0$ using the Reynolds operator (which is defined due to the hypothesis $\ch k=0$).
By \cite[Prop.~1.2.10.(d)]{BH93} and induction, it follows that 
\[
\grade(\mm,B)=\grade(\mm B,B)\ge\min\{\ell,\dim B^{\bar 0}\}.
\]
Since $B\cong\bigoplus_{\bar i\in\ZZ_d}B^{\bar i}$ as $B^{\bar 0}$-module, any $B$-regular sequence in $\mm$ is also $B^{\bar i}$-regular.
Thus,
\[
\grade(\mm,B^{\bar i})\ge\grade(\mm,B)\ge\min\{\ell,\dim B^{\bar 0}\}\ge\min\{\ell,\dim B^{\bar i}\}.\qedhere
\]
\end{proof}

\begin{mathias}

\begin{proof}[Mathias' failed ambitious attempt to prove Lemma~\ref{31}]
We follow the lines of \cite[\S2]{Fle81b}.
There is an inclusion of graded $k$-algebras
\[
A\into A'[T^{\pm1}],\quad \sum_ia_i\mapsto\sum_ia_iT^i
\]
where the grading on the target is defined by $\deg T=1$ (forgetting the grading of $A$).
Pick $f\in A_d$, $d>0$, then the above map factors 
\begin{equation}\label{108}
\xymat{
A\ar@{^(->}[r]^-\varphi\ar[d]_\alpha & A'[T^{\pm1}]\ar[d]_\beta\\
A_f\ar[r]^-{\varphi_f} & A'[T^{\pm1}]/\ideal{f-1}\rlap{$=(A'/\ideal{f-1})[T^{\pm1}]$}
}
\end{equation}
More generally, for a finitely generated graded $A$-module $M$, there is an inclusion of graded modules over $\varphi$
\begin{equation}\label{110}
M\into M'[T^{\pm1}]=M'\otimes_{A'}A'[T^{\pm1}],\quad \sum_im_i\mapsto\sum_im_iT^i.
\end{equation}
Tensoring with $\varphi_f$, we obtain a diagram of graded modules over \eqref{108}
\begin{equation}\label{111}
\xymat{
M\ar@{^(->}[r]^-\psi\ar[d] & M'[T^{\pm1}]\ar[d]\\
\llap{$M_f=$}A_f\otimes_\alpha M\ar[r]^-{\varphi_f\otimes\psi} & (A'/\ideal{f-1})[T^{\pm1}]\otimes_\beta M'[T^{\pm1}]\rlap{$=(M'/\ideal{f-1}M')[T^{\pm1}]$.}
}
\end{equation}

Let $\mu_d=\ideal{\zeta}$ be the cyclic group of order $d$ generated by a primitive $d$-th root of unity.
It acts on $A_i$ and $M_i$ by multiplication by $\zeta^i$ and on $T$ by multiplication by $\zeta$.
As $f$ is $\mu_d$-invariant, there is an induced action on $A'/\ideal{f-1}$ and on $M'/\ideal{f-1}M'$.
By \cite[(2.1) Lem.]{Fle81b}, there is a commutative diagram
\begin{equation}\label{109}
\xymat{
A_f\ar[r]_-\cong & (A'/\ideal{f-1})[T^{\pm1}]^{\mu_d}\ar@{^(->}[r] & (A'/\ideal{f-1})[T^{\pm1}]\\
(A_f)_0\ar[r]_-\cong\ar@{^(->}[u] & (A'/\ideal{f-1})^{\mu_d}\ar@{^(->}[r]\ar@{^(->}[u] & A'/\ideal{f-1}\ar@{^(->}[u]
}
\end{equation}
where the lower row is the degree-$0$ part of the upper one and the middle column the induced map on the $\mu_d$-invariants of the right column.
The same argument with $A$ replaced my $M$ yields a commutative diagram over \eqref{109}
\[
\xymat{
M_f\ar[r]_-\cong & (M'/\ideal{f-1}M')[T^{\pm1}]^{\mu_d}\ar@{^(->}[r] & (M'/\ideal{f-1}M')[T^{\pm1}]\\
(M_f)_0\ar[r]_-\cong\ar@{^(->}[u] & (M'/\ideal{f-1}M')^{\mu_d}\ar@{^(->}[r]\ar@{^(->}[u] & M'/\ideal{f-1}M'\ar@{^(->}[u]
}
\]
In general, because of \eqref{110}, it is unclear whether  
\begin{equation}\label{112}
(M'/\ideal{f-1}M')[T^{\pm1}]=(A'/\ideal{f-1})[T^{\pm1}]\otimes_{\varphi_f}M_f.
\end{equation}
However, specializing to the case $M=A(e)$,
\[
M\otimes_{\varphi}A'[T^{\pm1}]=M'[T^{\pm1}](e).
\]
Thus, by \eqref{110} and \eqref{111}, \eqref{112} holds true after forgetting degrees:
\[
(M'/\ideal{f-1}M')[T^{\pm1}]'=(A'/\ideal{f-1})[T^{\pm1}]'\otimes_{\varphi_f'}M'_f.
\]
By hypothesis, the $A'_f$-module $M'_f$ satisfies $(S_\ell)$ and $\varphi_f'$ is \'etale by \cite[(2.2) Lem.]{Fle81b}.
It follows that the $A'/\ideal{f-1}$-module $M'/\ideal{f-1}M'$ satisfies $(S_\ell)$.
\end{proof}

\end{mathias}

\begin{prp}\label{19}
If $\ch k=0$ then global sections of \eqref{25} form a short exact sequence
\[
\xymat{
0\ar[r]&H^0(X,\O_X(d))\ar[r]&(\Der_kA)_d\ar[r]&H^0(X,\SDer_X(dD-\log L_{D,d}))\ar@{-->}[r]&0,
}
\]
with the dashed arrow if $A$ is $(S_3)$ and $X$ is $(R_2)$ of dimension $\dim X\ge2$.
\end{prp}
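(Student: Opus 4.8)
The plan is to take global sections of the sequence \eqref{25} from Proposition~\ref{23} and to treat its two ends separately. The first observation is that the hypothesis $\ch k=0$ makes $W_{D,d}=0$ in Definition~\ref{12}: the condition $w\mid q_V$ is vacuous since $q_V>0$, so the first term of \eqref{25} is $\O_X(dD)\cong\O_X(d)$, matching the statement. The solid part of the asserted sequence then requires no further hypotheses: the first three terms of \eqref{25} form a left exact sequence of sheaves, and since $H^0(X,-)$ is left exact, applying it and identifying $H^0(X,(\pi_*\Theta_C)_d)=(\Der_kA)_d$ via Lemma~\ref{39} yields exactness of
\[
0\to H^0(X,\O_X(d))\to(\Der_kA)_d\to H^0(X,\SDer_X(dD-\log L_{D,d})).
\]

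For the dashed arrow I would assume in addition that $A$ is $(S_3)$, that $X$ is $(R_2)$, and that $\dim X\ge2$, and first upgrade \eqref{25} to the short exact sequence \eqref{95} by showing $\SH_Z^2(\O_X(dD))=0$. By Lemma~\ref{31} the $(S_3)$ property of $A$ descends to the reflexive $\O_X$-module $\O_X(dD)\cong\O_X(d)$. On the other hand, $(R_2)$ forces the singular locus $Z=X^\sing$ of the normal variety $X$ to have codimension $\ge3$, so every point $y\in Z$ satisfies $\dim\O_{X,y}\ge3$. Combining these, $\depth(\O_X(dD))_y\ge\min(3,\dim\O_{X,y})=3$ for all $y\in Z$. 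Via the identity $\grade(I,M)=\inf\{\depth M_\pp\mid\pp\in V(I)\}$ (Bruns--Herzog, already used in Lemma~\ref{31}) together with the vanishing $\SH_Z^j(\F)=0$ for $j<\grade(I_Z,\F)$, this depth bound gives $\SH_Z^j(\O_X(dD))=0$ for $j<3$, in particular $\SH_Z^2(\O_X(dD))=0$. Thus \eqref{25} collapses to the short exact sequence \eqref{95}.

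It then remains to see that the global sections of \eqref{95} stay short exact, i.e.\ that the connecting map $H^0(X,\SDer_X(dD-\log L_{D,d}))\to H^1(X,\O_X(dD))$ vanishes; for this I would prove $H^1(X,\O_X(dD))=0$. Passing to the cone $\Gamma^+=\Spec A$ with vertex $\mm$, the standard comparison between sheaf cohomology on $X=\Proj A$ and local cohomology at $\mm$ (after passing to a Veronese subalgebra generated in degree one if necessary) gives $H^1(X,\O_X(d))\cong H^2_\mm(A)_d$. Since $A$ is $(S_3)$ and $\dim A=\dim X+1\ge3$, its depth at the graded maximal ideal is $\ge\min(3,\dim A)=3$, whence $H^i_\mm(A)=0$ for $i\le2$ and in particular $H^2_\mm(A)_d=0$. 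Therefore $H^1(X,\O_X(dD))=0$, the connecting map vanishes, and the global section sequence is short exact.

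The technical heart, and the step I expect to be the main obstacle, is the twofold depth bookkeeping: transferring $(S_3)$ from $A$ to the sheaves $\O_X(d)$ (isolated in Lemma~\ref{31}, which is where $\ch k=0$ is genuinely needed) and then extracting from it the two vanishing statements. The delicate point is invoking $(R_2)$ exactly where the minimum in the $(S_3)$ condition would otherwise only yield depth $2$ at the generic points of $Z$, so that $\SH_Z^2$ is killed; and invoking $\dim X\ge2$ exactly so that $\min(3,\dim A)=3$ at the vertex, so that $H^1$ is killed. Both reductions are otherwise routine once the correct grade/codimension estimates are in place.
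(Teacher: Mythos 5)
Your proposal is correct and follows essentially the same route as the paper's proof: left exactness of $H^0$ plus Lemma~\ref{39} for the solid part, then Lemma~\ref{31} together with $(R_2)$ (giving $\codim_XZ\ge3$, hence depth $\ge3$ of $\O_X(d)$ along $Z$) to kill $\SH_Z^2(\O_X(dD))$, and $\depth A\ge3$ to kill $H^1(X,\O_X(d))$. The only cosmetic difference is that where you re-derive $H^1(X,\O_X(d))\cong H^2_\mm(A)_d=0$ by hand via the local-cohomology comparison on the cone, the paper simply cites Watanabe \cite[Cor.~2.3]{Wat81} for exactly this vanishing.
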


\begin{proof}
The first claim follows from Proposition~\ref{23} using Lemma~\ref{39}.
Now assume the additional hypotheses.
Since $\dim X\ge2$ and $A$ is $(S_3)$, we have $\depth A\ge 3$ and hence $H^1(X,\O_X(d))=0$ for all $d\in\ZZ$ by \cite[Cor.~2.3]{Wat81}.
Since $X$ is $(R_2)$, we have $\codim_XZ\ge3$.
We may therefore assume that $\dim X\ge3$ as otherwise $Z=\emptyset$.
By Lemma~\ref{31}, $\O_X(d)$ is $(S_3)$ and, in particular, $\depth_Z\O_X(d)\ge 3$.
By \cite[Thm.~3.8]{Har67}, this gives $\SH_Z^i(\O_X(d))=0$ and hence the second claim.
\end{proof}

For the remainder of the section, we assume that $X$ is regular, that $W_{D,d}=0$, and that $L_{D,d}$ is a free divisor.
In this case, \eqref{25} is a short exact sequence whose extension class can be described explicitly.
Due to the first two hypotheses, the sequence reads 
\begin{equation}\label{91}
\xymat{
0\ar[r]&\O_X(dD)\ar[r]^-\varphi&(\pi_*\Theta_C)_d\ar[r]^-\psi&\SDer_X(dD-\log L_{D,d})\ar[r]&0.
}
\end{equation}
By definition, the extension class of a short exact sequence of $\O_X$-modules 
\begin{equation}\label{100}
0\to\F\to\E\to\G\to0
\end{equation}
is the image of the identity morphism under the connecting homomorphism
\begin{equation}\label{97}
\xymat{
\Hom_{\O_X}(\F,\F)\ar[r]^\p&\Ext^1_{\O_X}(\G,\F);
}
\end{equation}
it parametrizes extensions \eqref{100} of $\F$ by $\G$ up to isomorphism.
As one derives from the spectral sequence associated to the composition of functors $\Hom_{\O_X}(-,-)=H^0\circ\SHom_{\O_X}(-,-)$, the target of \eqref{97} fits into an exact sequence
\begin{gather}\label{99}
0\to H^1(X,\SHom(\G,\F))\to\Ext^1_{\O_X}(\G,\F)\to H^0(X,\SExt^1_{\O_X}(\G,\F))\to\cdots\\
\nonumber\cdots\to H^2(X,\SHom(\G,\F))\to0.
\end{gather}
Due to the hypotheses $\SDer_X(-\log L_{D,d})$ is locally free and, by Lemma~\ref{34}.\eqref{34b} and Remark~\ref{101}, 
\begin{align*}
\SHom_{\O_X}(\SDer_X(dD-\log L_{D,d}),\O_X(dD))
&\cong\SHom_{\O_X}(\SDer_X(-\log L_{D,d})(dD),\O_X(dD))\\
&\cong\SHom_{\O_X}(\SDer_X(-\log L_{D,d}),\O_X)\\
&\cong\Omega_X^1(\log L_{D,d}).
\end{align*} 
It follows from \eqref{99} that the extension class of the sequence \eqref{91} is the image of $1\in H^0(X,\O_X)$ in $H^1(X,\Omega_X^1(\log L_{D,d}))$ under the connecting homomorphism in the long exact cohomology sequence associated to the dual sequence
\begin{equation}\label{96}
0\gets\SHom_{\O_X}(\O_X(dD),\O_X(dD))\gets\SHom_{\O_X}((\pi_*\Theta_C)_d,\O_X(dD))\gets\Omega^1(\log L_{D,d})\gets0.
\end{equation}
This image can be described explicitly as a \v{C}ech cocycle.
To this end, take an affine open covering
\[
X=\bigcup_\alpha X_\alpha
\]
and local defining equations $g^{(\alpha)}_i$ for $V_i$ on $X_\alpha$.
First, restrict the sequence \eqref{96} to each $X_\alpha$ and lift $\id_{\O_{X_\alpha}(dD)}$ to the middle term with the help of the splitting of the sequence \eqref{91} on $X_\alpha$ defined by \eqref{92}.
Rewriting \eqref{92}, this lift is given explicitly by
\[
\delta\mapsto\omega_\alpha(\psi(\delta)),\quad\omega_\alpha:=\sum_{p_id\equiv -1\mod q_i}\frac{p_i}{q_i}\frac{dg_i^{(\alpha)}}{g_i^{(\alpha)}}.
\]
Applying the \v{C}ech differential to the $0$-cocycle $(\omega_\alpha\circ\psi)_\alpha$, we obtain a $1$-cocycle $\varepsilon_D\in H^1(X,\Omega_X^1)$ given by
\begin{align}\label{93}
(\varepsilon_D)_{\alpha,\beta}
&:=\sum_{p_id\equiv -1\mod q_i}\frac{p_i}{q_i}\left(\frac{dg^{(\beta)}_i}{g^{(\beta)}_i}-\frac{dg^{(\alpha)}_i}{g^{(\alpha)}_i}\right)\\
\nonumber&\text{``}=d\log\prod_{p_id\equiv -1\mod q_i}\left(\frac{g^{(\beta)}_i}{g^{(\alpha)}_i}\right)^{\frac{p_i}{q_i}}\text{''}.
\end{align}

The preceding arguments now prove the following generalization of a result of Wahl (see \cite[Prop.~3.3]{Wah76}).

\begin{prp}\label{94}
Assume that $X$ is regular, that $W_{D,d}=0$, and that $L_{D,d}$ is a free divisor.
Then the extension class of the short exact sequence \eqref{91} is the image of $\varepsilon_D$ under the morphism induced by the natural inclusion
$\Omega_X^1\to\Omega_X^1(\log L_{D,d})$.\qed
\end{prp}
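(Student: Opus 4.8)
The plan is to make precise the computation that the preceding paragraphs have already carried out informally, so that Proposition~\ref{94} becomes essentially a reading-off of the explicit \v{C}ech cocycle. I would begin by recalling that, under the standing hypotheses ($X$ regular, $W_{D,d}=0$, and $L_{D,d}$ free), the four-term sequence \eqref{25} collapses to the short exact sequence \eqref{91}, since the local cohomology obstruction term vanishes on a regular $X$ and $\SDer_X(-\log L_{D,d})$ is locally free by freeness of the divisor. This is exactly the setup of \eqref{100} with $\F=\O_X(dD)$, $\E=(\pi_*\Theta_C)_d$, and $\G=\SDer_X(dD-\log L_{D,d})$.

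Next I would invoke the identification, established in the discussion preceding the proposition, of the target $\Ext^1_{\O_X}(\G,\F)$ via the local-to-global spectral sequence \eqref{99}. Under the freeness hypothesis one has the chain of isomorphisms $\SHom_{\O_X}(\G,\F)\cong\Omega_X^1(\log L_{D,d})$ recorded just above, so the extension class lives naturally in (the relevant graded piece of) $\Ext^1_{\O_X}(\G,\F)$ and its image in $H^1(X,\Omega_X^1(\log L_{D,d}))$ is computed by the connecting homomorphism of \eqref{97}. The key structural point I would emphasize is that, by the definition of the extension class as $\p(\id_\F)$, computing it amounts to choosing local splittings of \eqref{91} on an affine cover and applying the \v{C}ech differential to the resulting $0$-cochain of splittings.

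The heart of the proof is then the explicit choice of local splittings. Here I would use precisely the formula \eqref{92} from the proof of Proposition~\ref{47}: on each $X_\alpha$, the section $\alpha=\sum_i\alpha_i$ with $\alpha_i=\frac{p_i}{q_i}\frac{\sigma(g_i)}{g_i}$ (for indices satisfying the numerical condition $p_id\equiv-1\bmod q_i$, which under $W_{D,d}=0$ means $w\nmid q_i$) defines a splitting, encoded by the form $\omega_\alpha=\sum_{p_id\equiv-1\bmod q_i}\frac{p_i}{q_i}\frac{dg_i^{(\alpha)}}{g_i^{(\alpha)}}$. Applying the \v{C}ech coboundary to $(\omega_\alpha\circ\psi)_\alpha$ yields exactly the $1$-cocycle $\varepsilon_D$ written in \eqref{93}, whose components are the differences $\frac{p_i}{q_i}\bigl(\frac{dg_i^{(\beta)}}{g_i^{(\beta)}}-\frac{dg_i^{(\alpha)}}{g_i^{(\alpha)}}\bigr)$. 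Since the $g_i$ transition multiplicatively between charts, these differences are genuine global logarithmic forms, so $\varepsilon_D$ lands in $H^1(X,\Omega_X^1(\log L_{D,d}))$, matching the target of the connecting map. Finally I would observe that $\varepsilon_D$ is the image of the class computed in the plain $\Omega_X^1$ under the inclusion $\Omega_X^1\to\Omega_X^1(\log L_{D,d})$, which gives the stated formulation of the proposition.

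The main obstacle I anticipate is bookkeeping rather than conceptual: one must verify that the local splittings built from \eqref{92} are genuinely compatible with the map $\psi$ defining \eqref{91}, so that $(\omega_\alpha\circ\psi)_\alpha$ really represents $\id_\F$ lifted to the middle term, and that the \v{C}ech coboundary computes the connecting homomorphism $\p$ of \eqref{97} under the spectral-sequence identification \eqref{99}. This last compatibility—that the \v{C}ech description of the connecting map agrees with the spectral-sequence edge map into $H^1(X,\SHom(\G,\F))$—is the standard but slightly delicate point; once it is granted, the cocycle calculation producing \eqref{93} is direct. Because all of these verifications were essentially performed in the paragraphs preceding the statement, the proof of Proposition~\ref{94} reduces to assembling them, which is why the author marks it with \qed in the statement itself.
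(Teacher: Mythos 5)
Your proposal is correct and follows essentially the same route as the paper: collapse \eqref{25} to the short exact sequence \eqref{91}, identify $\SHom_{\O_X}(\G,\F)\cong\Omega_X^1(\log L_{D,d})$ via \eqref{99}, lift the identity locally using the splittings \eqref{92} encoded by the forms $\omega_\alpha$, and read off the \v{C}ech coboundary \eqref{93}. One phrasing to sharpen: the differences $\frac{dg_i^{(\beta)}}{g_i^{(\beta)}}-\frac{dg_i^{(\alpha)}}{g_i^{(\alpha)}}$ are \emph{regular} forms (since $g_i^{(\beta)}/g_i^{(\alpha)}$ is a unit on overlaps), not merely logarithmic ones, which is precisely why $\varepsilon_D$ lives in $H^1(X,\Omega_X^1)$ and the proposition speaks of its image under $\Omega_X^1\to\Omega_X^1(\log L_{D,d})$ --- a point your final sentence implicitly acknowledges.
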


\section{Examples}\label{38}

\subsection{A normal ICIS with negative derivation}\label{38a}

We apply our result to a graded isolated complete intersection singularity with negative derivation found in \cite{GS14}.
Let $P=k[x_1,\dots,x_6]$ be the graded polynomial algebra with weights $w=(8,8,5,2,2,2)$ on the variables $x_1,\dots,x_6$.
The weighted homogeneous polynomials
\begin{align*}
g_1:=&x_1x_4+x_2x_5+x_3^2-x_4^5,\\
g_2:=&x_1x_5+x_2x_6+x_3^2+x_6^5
\end{align*}
of degree $10$ define a $4$-dimensional graded isolated complete intersection singularity $A:=P/\ideal{g_1,g_2}$ with a derivation 
\begin{equation}\label{18}
\eta:=
\begin{vmatrix}
\p_1 & \p_2 & \p_3 \\
x_4 & x_5 & 2x_3 \\
x_5 & x_6 & 2x_3 
\end{vmatrix}
=2x_3(x_5-x_6)\p_1-2x_3(x_4-x_5)\p_2+(x_4x_6-x_5^2)\p_3
\end{equation}
of degree $-1$.
Consider the corresponding $3$-dimensional projective $k$-variety $X:=\Proj A\subset\PP_k^w$.
By Delorme's reduction~\cite[1.3.~Prop.]{Del75} (see also \cite[1.3.1.~Prop.]{Dol82}), the second Veronese subring of $P$ is the polynomial subring $P^{(2)}=k[x_1,x_2,y,x_4,x_5,x_6]$ where $y=x_3^2$ is a new variable of weight $10$.
Because $g_1$ and $g_2$ are contained in $P^{(2)}$, the second Veronese subring of $A$ is
\[
A^{(2)}=P^{(2)}/\ideal{g_1,g_2}
\]
where 
\begin{align*}
g_1=&x_1x_4+x_2x_5+y-x_4^5,\\
g_2=&x_1x_5+x_2x_6+y+x_6^5.
\end{align*}
Since $y$ occurs as a linear term of $g_1$ and $g_2$, there is an isomorphism
\[
A^{(2)}=P^{(2)}/\ideal{g_1-g_2,g_1}\cong P^{(2)}/\ideal{g,y}=k[x_1,x_2,x_4,x_5,x_6]/\ideal{g}\cong P'/\ideal{g}=:A'
\]
where
\[
g:=g_1-g_2=x_1x_4-x_1x_5+x_2x_5-x_2x_6-x_4^5-x_6^5
\]
and $P':=k[x_1,x_2,x_4,x_5,x_6]$ is the polynomial ring with weights renormalized to $w'=4,4,1,1,1$.
Then we have
\[
X=\Proj A\cong\Proj A^{(2)}\cong\Proj A'.
\]

Thus, $A'$ is a graded isolated hypersurface singularity and we can consider $X\subset\PP_k^{w'}$.
In particular, 
\begin{equation}\label{44}
X^\sing\subseteq V(x_4,x_5,x_6).
\end{equation}
In order to pass to the chart $x_1=1$ we introduce a $4$th root of $x_1$ and divide $g$ by its $5$th power.
As a result we obtain
\[
g=(1-x_4^4)x_4-(1-x_2)x_5-(x_2+x_6^4)x_6.
\]
Then $X\vert_{x_1=1}$ is the quotient of the affine $k$-variety defined by $g$ by the cyclic group $\mu_4$ acting diagonally on $x_4,x_5,x_6$.
Unless $x_2=1$, $g$ serves to eliminate $x_5$ and hence, using the notation from \cite[\S2.4]{Kol07},
\[
X\vert_{x_1=1\ne x_2}\cong(\AA_k^1\setminus\{0\})\times(\AA_k^2/\frac14(1,1))
\]
corresponding to variables $x_2,x_4,x_6$.
An analogous argument applies to the chart $x_2=1$.
Therefore, \eqref{44} is an equality
\[
X^\sing=V(x_4,x_5,x_6).
\]
In particular, $\codim_XX^\sing=2$ and $X$ is not $(R_2)$.

Following Demazure's construction (see \cite[(2.6) Lem.~(iii), (2.10)]{Dem88}), one can choose $T=x_3/x_4^2$ and hence $D=\frac12 V(x_3)-2V(x_4)$.
In particular, $L_{D,-1}=0$ (see Definition~\ref{12}).
Since $H^0(X,\O_X(-1))=0$, Proposition~\ref{19} serves to interpret
\[
\eta\in H^0(X,\SDer_X(-D)).
\]

\subsection{A nonsingular plane conic}\label{38b}

In this section, we compute explicitly the sheaves $\M_{D,d}=(\pi_*\Theta_C)_d$ on the toy model $(X,D)$ where $X=\PP^1_k$ with $\O_X(D)=\O_X(2)$. 
The section ring is isomorphic to the subring
\[
A:=k[x^2,xy,y^2]\subset k[x,y], 
\]
where the generators $x^2,xy,y^2$ all have degree $1$. 
We see that $y^{-1}\partial_x$ is a derivation of degree $-1$ if $\ch(k)=2$. 
This can also be seen by passing to the isomorphic ring
\[
A\cong k[x,z,y]/\ideal{z^2-xy}
\]
where the degrees of $x,y,z$ are all $1$. 
There is no derivation of degree $-1$ if $\ch(k) \neq 2$, but $\partial_z$ is such a derivation if $\ch(k) = 2$. 

We follow closely the idea used in our construction of the generalized Euler sequence.
In Demazure's construction (see \S\ref{13}), one has to choose a homogeneous fraction of degree $1$ in $K(C)$, and uses it to define an ample $\QQ$-Cartier $\QQ$-divisor. 
We choose $T=xy$ for our purpose and pass to the isomorphic ring
\[
k(t,T)\supset k[tT,T,\frac Tt]\cong A,\quad T\mapsto xy,\quad t\mapsto\frac xy.
\]
In this particular model for the section ring $A$, $\deg t=0, \deg T=1$, and $t$ is the parameter for $\PP^1_k$. 
The ample $\QQ$-Cartier $\QQ$-divisor $D$ on $\PP^1_k$ is defined by $\divisor(T)=\pi^{-1}(D)$. 
It is clear that
\[
D=\{0\}+\{\infty\}
\]
because $xy=0$ implies either $x=0$ or $y=0$.

We use the standard affine cover of $\PP^1_k$ by $U_1=\Spec k[t]$ and $U_2=\Spec k[s]$ with gluing given by $s=\frac 1t$. 
Correspondingly, the cylinder $C$ is covered by $C_i=\pi^{-1}(U_i)$, $i=1,2$. 
We have $C_1=\Spec k[t][\frac Tt,\frac tT]$ and $C_2 = \Spec k[s][\frac Ts,\frac sT]$. 
Notice in particular that
\begin{equation*}
k[tT,T,\frac Tt] = k[t][\frac Tt,\frac tT] \cap k[s][\frac Ts, \frac sT],
\end{equation*}
so a degree-$-1$ derivation on $A$ is simultaneously a degree-$-1$ derivation on both $C_1$ and $C_2$.

Let us find the specific form of degree-$-1$ derivations on $C_1$. 
Given such a derivation $\delta$, we write $T\delta=\sigma'+\alpha'T\partial_T$ as in \S\ref{71}. 
Since $\delta$ is of degree $-1$, it must take the degree $n$ part of $k[t][\frac{T}{t},\frac{t}{T}]$ into its degree $n-1$ part. 
In other words,
\begin{equation*}
\delta(t^\ell(\frac{T}{t})^n) \in k[t]\cdot(\frac{T}{t})^{n-1},\quad\text{for all }n\in\ZZ. 
\end{equation*}

Using the decomposition of $\delta$, we get 
\begin{equation*}
\sigma'(t)t^{-1}\in k[t],\quad n\cdot (\frac{\alpha'}{t}-\frac{\sigma'(t)}{t^2})\in k[t], \quad\text{for all }n\in\ZZ.
\end{equation*}

The set of pairs $(\sigma'(t),\alpha')$ satisfying this condition is a free $k[t]$-module of rank $2$ with basis $(t,1)$ and $(t^2,0)$. 
Therefore, the $\O_{U_1}$-module $(\pi_*(\Theta_C))_{-1}\vert_{U_1}$ is free of rank $2$ with basis $\frac{t}{T}\partial_t + \partial_T$ and $\frac{t^2}{T}\partial_t$. 
Symmetrically, one can find that $(\pi_*(\Theta_C))_{-1}\vert_{U_2}$ is free with basis $\frac{s}{T}\partial_s + \partial_T$ and $\frac{s^2}{T}\partial_s$. 
How do these bases glue on $U_1 \cap U_2$? 
If $\ch(k) = 2$, we see that 
\begin{align*}
\frac{t}{T}\partial_t + \partial_T &= (\frac{t}{T}\partial_{t}s)\partial_s + \partial_T = -\frac{s}{T}\partial_s + \partial_T = \frac{s}{T}\partial_s + \partial_T,\\
\frac{t^2}{T}\partial_t &= \frac{t^2}{T}(\partial_{t}s)\partial_s = -\frac{1}{T}\partial_s = -s^{-2}(\frac{s^2}{T}\partial_s) = s^{-2}(\frac{s^2}{T}\partial_s).
\end{align*}
So the two pieces glue like $\O_X\oplus\O_X(-2)$.
If $\ch(k) \neq 2$, we can use the modified basis
\begin{equation*}
\frac{t}{T}\partial_t + \partial_T, \quad t(\frac{t}{T}\partial_t + \partial_T) -2\frac{t^2}{T}\partial_t = -\frac{t^2}{T}\partial_t + t\partial_T
\end{equation*}
on $U_1$, and correspondingly the basis
\begin{equation*}
\frac{s}{T}\partial_s + \partial_T, \quad-\frac{s^2}{T}\partial_s + s\partial_T
\end{equation*}
on $U_2$. 
Changing coordinate on $\PP^1_k$ from $t$ to $s$, we see that
\begin{align*}
\frac{t}{T}\partial_t + \partial_T &= -\frac{s}{T}\partial_s + \partial_T = s^{-1}(-\frac{s^2}{T}\partial_s + s\partial_T),\\
-\frac{t^2}{T}\partial_t + t\partial_T &= \frac{1}{T}\partial_s + \frac{1}{s}\partial_T = s^{-1}(\frac{s}{T}\partial_s + \partial_T). 
\end{align*}
So this time the two pieces glue like $\O_X(-1) \oplus \O_X(-1)$.

We summarize the computation as follows

\begin{prp}\label{89}
For $X=\PP^1_k$, $\L\cong\O_X(2)$, we have
\[
\M_{D,0}\cong
\begin{cases}
\O_X(2)\oplus\O_X,&\text{if }\ch(k)=2,\\
\O_X(1)\oplus\O_X(1),&\text{if }\ch(k)\neq2.
\end{cases}
\]
The generalized Euler sequence 
\begin{equation*}
0\to\L^d\to\M_{D,d}\to\Theta_{\PP^1}\otimes_{\O_X}\L^d\to0
\end{equation*}
is obtained by twisting the $0$th sequence by the $d$th power of $\L$.
\end{prp}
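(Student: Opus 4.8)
The plan is to reduce everything to the degree-zero case and then twist. Since every prime component of $D=\{0\}+\{\infty\}$ has denominator $q_V=1$, Remark~\ref{57} applies and gives $\M_{D,d}=\M_{D,0}\otimes_{\O_X}\L^d$ for all $d\in\ZZ$, where $\L=\O_X(D)=\O_X(2)$. Thus it suffices to identify $\M_{D,0}$ and to exhibit the $0$th Euler sequence; the sequence for general $d$ is then recovered by tensoring with the line bundle $\L^d$, an exact operation that, by Remark~\ref{57}, carries each term of the $0$th sequence to the corresponding term in degree $d$.

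To compute $\M_{D,0}$ I would run the local analysis of the preceding paragraphs in degree $0$ rather than degree $-1$ (equivalently, twist the degree-$-1$ result by $\L$). On $U_1$ and $U_2$ one again obtains free $\O$-modules of rank $2$ with explicit bases, and the only remaining task is to read off the transition matrix on $U_1\cap U_2$. The computation bifurcates exactly as in the body: when $\ch(k)=2$ one generator is invariant and the other transforms by $s^{-2}$, exhibiting the gluing $\O_X(2)\oplus\O_X$; when $\ch(k)\neq 2$ the modified bases glue by a matrix whose two pieces each have weight $1$, giving $\O_X(1)\oplus\O_X(1)$. This characteristic-dependent splitting is the one genuinely computational point, and I expect it to be the main obstacle; however it is already carried out verbatim just above, so here it amounts to transcribing the (now $\L$-twisted) transition functions and recognising the resulting splitting type.

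It remains to produce the sequence. Applying Definition~\ref{12} with $q_V=1$ and $w=\ch(k)$, one checks that $w\mid q_V$ never holds while $p_Vd\equiv-1\bmod q_V$ always holds, whence $W_{D,d}=0$ and $L_{D,d}=0$ for every $d$. Consequently $\SDer_X(dD-\log L_{D,d})=\SDer_X(dD-\log 0)$, which by Lemma~\ref{34}.\eqref{34b}, since $\floor{dD}=dD$ is Cartier on the smooth curve $X$, equals $\Theta_{\PP^1}\otimes_{\O_X}\L^d$. As $X=\PP^1_k$ is regular, $Z=X^\sing=\emptyset$ and $\SH_Z^2=0$, so the generalized Euler sequence of Proposition~\ref{23} is short exact; at $d=0$ it reads $0\to\O_X\to\M_{D,0}\to\Theta_{\PP^1}\to0$. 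Tensoring with $\L^d$ preserves exactness because $\L^d$ is locally free, and Remark~\ref{57} identifies the twisted middle term $\M_{D,0}\otimes\L^d$ with $\M_{D,d}$, the outer terms becoming $\L^d$ and $\Theta_{\PP^1}\otimes\L^d$. This realises $0\to\L^d\to\M_{D,d}\to\Theta_{\PP^1}\otimes\L^d\to0$ as the $\L^d$-twist of the $0$th sequence, completing the proof; the only care needed is the bookkeeping of the characteristic-dependent transition matrices and the vanishing of $W_{D,d}$ and $L_{D,d}$.
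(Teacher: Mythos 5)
Your proposal is correct and follows essentially the same route as the paper: both rest on the explicit rank-two local bases and transition matrices already computed in this section (equivalently, the splitting of $\M_{D,-1}$) together with Remark~\ref{57} to shift degrees by tensoring with $\L$. Your additional verification that $W_{D,d}=0$, $L_{D,d}=0$, and $Z=\emptyset$, so that Proposition~\ref{23} yields the short exact $0$th sequence $0\to\O_X\to\M_{D,0}\to\Theta_{\PP^1}\to0$, merely makes explicit what the paper leaves implicit in citing Remark~\ref{57}.
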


\begin{proof}
We showed above that $\M_{D,-1}\cong\O_X\oplus\O_X(-2)$ and $\M_{D,-1}\cong\O_X(-1)\oplus\O_X(-1)$ in the case that $\ch(k)=2$ or otherwise. 
The remaining argument follows from Remark~\ref{57}.
\end{proof}

\begin{rmk}
The splitting of the short exact sequence in Proposition~\ref{89} in case $\ch(k)=2$ was observed already by Wahl (see \cite[Rem.~(3.4)]{Wah76}).
It also follows from Proposition~\ref{94} since $\frac{ds}{s}-\frac{dt}{t}=2\frac{ds}{s}=0$.
\end{rmk}

\bibliographystyle{amsalpha}
\bibliography{dga}
\end{document}